\sloppy \pagestyle{plain}\binoppenalty=10000 \relpenalty=10000
\newtheorem{theorem}{Theorem}[section]
\newtheorem{lemma}{Lemma}[section]
\newtheorem{corollary}{Collorary}[section]
\newtheorem{example}{Example}[section]
\newcommand{\Aut}{\mathop{\sf Aut}\nolimits}
\newcommand{\tr}{\mathop{\sf tr}\nolimits}
\begin{document}

\begin{center}

\hfill FIAN/TD-11/10\\
\hfill ITEP/TH-46/10

\vspace{1cm}

\end{center}
\centerline  {\large\bf Algebra of differential operators associated with Young
diagrams}

{\ }

\centerline {A.Mironov, A.Morozov, S.Natanzon}
\address{Theory Department, Lebedev Physical Institute, Moscow, Russia;
Institute for Theoretical and Experimental Physics, Moscow, Russia}
\email{mironov@itep.ru; mironov@lpi.ru}
\address{Institute for Theoretical and Experimental Physics;
Laboratoire de Mathematiques et
Physique Theorique, CNRS-UMR 6083, Universite Francois Rabelais de
Tours, France}
\email{morozov@itep.ru}
\address {Department of Mathematics, Higher School of Economics, Moscow, Russia;
A.N.Belozersky Institute, Moscow State University, Russia;
Institute for Theoretical and Experimental Physics}
\email{natanzons@mail.ru}

\vspace{.5cm}

{\ }

\rightline {\it {To the memory of Vladimir Igorevich Arnold}}

\vspace{1cm}

{\footnotesize
We establish a correspondence between Young diagrams and differential operators of infinitely
many variables. These operators form a commutative associative algebra isomorphic to the algebra of
the conjugated classes of finite permutations of the set of natural numbers. The Schur functions form a complete
system of common eigenfunctions of these differential operators, and their eigenvalues are expressed through
the characters of symmetric groups. The structure constants of the algebra are expressed through the Hurwitz numbers.
}

\bigskip

\tableofcontents

\section{{\bf Introduction}}

Center of the group algebra $A_n$ of the symmetric group $S_n$ plays the main role
in describing representations both of the symmetric group and of the matrix group
$Gl(n)$. Its counterpart for the infinite symmetric group is the algebra $A_{\infty}$
of the conjugated classes of finite permutations of an infinite set
\cite{IK}. Its natural generators are the Young diagrams of arbitrary degree.

In the present paper, which is a continuation of \cite{MMN1},
we construct an exact representation of algebra
$A_{\infty}$ in the algebra of differential operators
of infinitely many variables. The differential operators $\textsf{W}(\Delta)$,
corresponding to the Young diagrams $\Delta$, are closely related to the
Hurwitz numbers, matrix integrals and integrable systems
\cite{MS,BM,MM,M1,M2}. We prove that the Schur functions form a complete set of the
common eigenfunctions of the operators
$\textsf{W}(\Delta)$, and find the corresponding eigenvalues. A key role in the
construction
is played by the Miwa variables, which naturally emerge in matrix models
\cite{Miwa,GKM}.

In section 2, we define the algebra of Young diagrams, which is isomorphic to the
algebra of conjugated classes of finite permutations of an infinite set, and express
its structure constants through the structure constants of the algebra $A_n$.
In section 3, we construct a representation of the universal enveloping
algebra $U(gl(\infty))$ in the algebra of differential operators of Miwa variables.
Using this representation, in section 4, we associate with any Young diagram
a differential operator $\mathcal{W}(\Delta)$ of Miwa variables, which has a very simple form.
This correspondence gives rise to an exact representation of the algebra
$A_\infty$.

The operators $\mathcal{W}(\Delta)$ preserve the subspace
$\textsf{P}$ of all symmetric polynomials of the Miwa variables.
We study further the differential operators $\textsf{W}(\Delta)=
\mathcal{W}(\Delta)|_{\textsf{P}}$ of the variables $p=\{p_i\}$,
which form a natural basis in the space $\textsf{P}$. In section 5, we prove that the
Schur functions $s_R(p)$ form a complete system of eigenfunctions for
$\textsf{W}(\Delta)$ and find the corresponding eigenvalues.
In section 6, we explain an algorithm of calculating the operators
$\textsf{W}(\Delta)$, the simplest non-trivial operator $\textsf{W}([2])$
being nothing but the "cut-and-join" operator \cite{GJV}, which plays an important role
in the theory of Hurwitz numbers and moduli spaces.

In the last section 7, we interpret the operators
$\textsf{W}(\Delta)$ as counterparts of the "cut-and-join" operator for the arbitrary Young
diagram. In particular, we prove that a special generating function of Hurwitz numbers
satisfies a simple differential equation, which allows one to construct all the Hurwitz
numbers successively.

We thank M.Kazarian, A.N.Kirillov, S.Lando and S.Loktev for fruitful
discussions. S.Natanzon is also grateful to IHES for perfect conditions
for finishing this work.
Our work is partly supported by Ministry of Education and Science of
the Russian Federation under contract 14.740.11.0081, by RFBR
grants 10-02-00509-a (A.Mir. \& S.N.) and 10-02-00499 (A.Mor.),
by joint grants 09-02-90493-Ukr, 09-01-92440-CE, 09-02-91005-ANF,
10-02-92109-Yaf-a. The work of A.Morozov was also supported in part
by CNRS.

\section{{\bf Algebra $A_{\infty}$ of Young diagrams}}\label{s1}

1. First we remind the standard facts that we need below. Denote through
$|\mathfrak{M}|$ the number of elements in a finite set $\mathfrak{M}$
and through $S_n$ the symmetric group which acts by permutations on the set
$\mathfrak{M}$, where $|\mathfrak{M}|=n$. A permutation $g\in S_n$ gives rise to
a subgroup $<g>$, whose action divides
$\mathfrak{M}$ into orbits $\mathfrak{M}_1,\dots,\mathfrak{M}_k$. The set of numbers
$|\mathfrak{M}_1|,\dots,|\mathfrak{M}_k|$ is called \textit{cyclic type of the
permutation $g$}. It produces the Young diagram $\Delta(g)=[|\mathfrak{M}_1|,\dots,
|\mathfrak{M}_k|]$ of degree $n$. The permutations are conjugated in $S_n$ if and only
if they are of the same cyclic type.

Linear combinations of the permutations from $S_n$ form the group algebra
$G_n=G(S_n)$. Multiplication in this algebra is denoted as $"\circ"$.
Associate with each Young diagram $\Delta$ the sum $G_n(\Delta)\in G_n$
of all permutations of the cyclic type $\Delta$. These sums form the basis
\textit{of the algebra of the conjugated classes} $A_n^{\circ}\subset
G_n$, which coincides with the center $G_n$.

Denote through $C_{\Delta_1,\Delta_2}^{\Delta}$ the structure constants
of the algebra $A_n$ in this basis. In other words,
$$G_n(\Delta_1)\circ G_n(\Delta_2)=\sum\limits_ {\Delta\in\mathcal{A}_n}
C_{\Delta_1,\Delta_2}^{\Delta}G_n(\Delta),$$
where $\mathcal{A}_n$ is the set of all Young diagrams $\Delta$ of degree $|\Delta|=n$.

The construction of algebra $A_n^{\circ}$ is continued in \cite{IK} to the algebra
$A_\infty$ of the conjugated classes of finite permutations of the set of natural numbers
$\mathbb{N}=\{1,2,\dots\}$. The algebra $A_\infty$ is generated by
$G_{\infty}(\Delta)$, which are a formal sum of all finite permutations of the set
$\mathbb{N}$ of the cyclic type $\Delta$. Multiplication in the algebra is
generated by the multiplication of permutations. According to \cite{IK}, this
algebra is naturally isomorphic to the algebra of the shifted Schur functions
\cite{OO}.

2. We express now the structure constants of the algebra $A_\infty$ through the structure
constants of the algebra $A_n^{\circ}$. First, we represent the algebra $A_n^{\circ}$
as an algebra generated by Young diagrams. In other words, we consider
$A_n^{\circ}$ as a vector space with the basis $\mathcal{A}_n$ and multiplication
$$\Delta_1\circ \Delta_2=
\sum\limits_ {\Delta\in\mathcal{A}_n}C_{\Delta_1,\Delta_2}^{\Delta}\Delta.$$

Consider also the monomorphism of the vector space
$\rho_k:A_n^{\circ}\rightarrow A_{n+k}^{\circ}$,
where $\rho_k(\Delta)=\frac{(r+k)!}{r!k!}{\Delta}^k$.
Here ${\Delta}^k$ is the Young diagram obtained from the Young diagram
$\Delta$ by adding $k$ unit length rows and $r$ is the number of unit length
rows originally present in the diagram $\Delta$.

Let us define a multiplication of diagrams of arbitrary degree by the formula
$$\Delta_1\Delta_2=
\sum\limits_ {n=\max\{|\Delta_1|,|\Delta_2|\}} ^{|\Delta_1|+|\Delta_2|}
\{\Delta_1\Delta_2\}_{n}$$
where
$$
\{\Delta_1\Delta_2\}_n = \left\{\begin{array}{lr}\rho_{(n-|\Delta_1|)}(\Delta_1)\circ
\rho_{(n-|\Delta_2|)}(\Delta_2)&
\hbox{for } n=\max\{|\Delta_1|,|\Delta_2|\}\\ \\
\rho_{(n-|\Delta_1|)}(\Delta_1)\circ
\rho_{(n-|\Delta_2|)}(\Delta_2)&
-\sum\limits_ {k=\max\{|\Delta_1|,|\Delta_2|\}}^{n-1}
\rho_{(n-k)}(\{\Delta_1\Delta_2\}_k)\\ \\
&\hbox{for }n>\max\{|\Delta_1|,|\Delta_2|\}
\end{array}\right.$$

\begin{example} {\rm Put $\Delta_1=[1]$ and $\Delta_2=[2]$.
Then, $\rho_1([1])=2[1,1]$, $\rho_2([1])=3[1,1,1]$,
$\rho_1([2])=[2,1]$.
Therefore,
$\{\Delta_1\Delta_2\}_2=2[1,1]\circ[2]=2[2]$,
$\{\Delta_1\Delta_2\}_3=3[1,1,1]\circ[2,1]-2[2,1]=[2,1]$}
\end{example}

\begin{example} {\rm Put $\Delta_1=[2]$ and $\Delta_2=[2]$.
Then, $\rho_1([2])=[2,1]$, $\rho_2([2])=[2,1,1]$,
$\rho_1([1,1])=3[1,1,1]$, $\rho_2([1,1])=6[1,1,1,1]$,
$\rho_1([3])=[3,1]$.
Therefore,
$\{\Delta_1\Delta_2\}_2=[2]\circ[2]=[1,1]$,
$\{\Delta_1\Delta_2\}_3=[2,1]\circ[2,1]-3[1,1,1]=3[3]$ and
$\{\Delta_1\Delta_2\}_4=[2,1,1]\circ[2,1,1]-3[3,1]-6[1,1,1,1]=2[2,2]$.}
\end{example}

\begin{theorem} {\rm \label{t1} The operation $(\Delta_1,\Delta_2)\mapsto\Delta_1\Delta_2$
gives rise on $A_{\infty}^{\circ}=\bigoplus\limits_{n}A_n^{\circ}$ to the
structure of a commutative associative algebra.}
\end{theorem}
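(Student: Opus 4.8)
\emph{Overall strategy.} The plan is to avoid verifying the two axioms by brute force from the recursion and instead exploit that the construction is tailored so that the recursion computes the product of the already-known associative algebra $A_\infty$ of \cite{IK}; commutativity and associativity of $A_\infty^\circ$ are then inherited. The mechanism is a family of \emph{stabilisation maps}: for each $N$ let $\theta_N\colon A_\infty^\circ=\bigoplus_n A_n^\circ\to A_N^\circ$ be the linear map equal to $\rho_{N-n}$ on $A_n^\circ$ for $n\le N$ and to $0$ on $A_n^\circ$ for $n>N$. Each $(A_N^\circ,\circ)$ is a commutative associative unital algebra, being the centre of $G(S_N)$. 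I would establish (I) the $\theta_N$ are \emph{jointly injective}, $\bigcap_N\ker\theta_N=0$; and (II) for any fixed $x,y\in A_\infty^\circ$, $\theta_N(x\,y)=\theta_N(x)\circ\theta_N(y)$ once $N$ is large enough. Granting these: for $x,y,z\in A_\infty^\circ$ and $N$ large, associativity of $\circ$ in $A_N^\circ$ gives $\theta_N((xy)z)=(\theta_N(x)\circ\theta_N(y))\circ\theta_N(z)=\theta_N(x)\circ(\theta_N(y)\circ\theta_N(z))=\theta_N(x(yz))$, so $(xy)z-x(yz)\in\bigcap_N\ker\theta_N=0$. The empty diagram $[\,]\in A_0^\circ$ is a unit, since $\{[\,]\,\Delta\}_n$ occurs only for $n=|\Delta|$ and equals $\rho_{|\Delta|}([\,])\circ\rho_0(\Delta)=G_{|\Delta|}([1^{|\Delta|}])\circ\Delta=\Delta$.

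\emph{Commutativity.} This I would settle first, directly, by induction on $n$: the formula for $\{\Delta_1\Delta_2\}_n$ is symmetric in $\Delta_1,\Delta_2$, since its leading term $\rho_{(n-|\Delta_1|)}(\Delta_1)\circ\rho_{(n-|\Delta_2|)}(\Delta_2)$ is symmetric ($\circ$ being commutative on the centre $A_k^\circ$) and the correction $\sum_{k<n}\rho_{(n-k)}(\{\Delta_1\Delta_2\}_k)$ is symmetric by the induction hypothesis. Hence $\Delta_1\Delta_2=\Delta_2\Delta_1$, and commutativity of the extended product follows by bilinearity.

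\emph{The two ingredients.} For (I): take $0\ne x=\sum_{n\le m}x_n\in A_\infty^\circ$ with $x_m\ne0$, pick a diagram $\mu$ of degree $m$ occurring in $x_m$ with nonzero coefficient $c_\mu$, and let $\bar\mu$ be $\mu$ with its rows of length $1$ removed. Since $\rho_{N-n}$ carries a diagram $\lambda$ of degree $n$ to a binomial multiple of $\lambda$ with $N-n$ unit rows adjoined, whose non-unit part is again the non-unit part of $\lambda$, only the diagrams $\bar\mu\cup[1^{n-|\bar\mu|}]$, one for each $n$ with $|\bar\mu|\le n\le m$, contribute to the coefficient of $\bar\mu\cup[1^{N-|\bar\mu|}]$ in $\theta_N(x)$; that coefficient equals $\sum_{n=|\bar\mu|}^{m}\binom{N-|\bar\mu|}{N-n}\,c^{(n)}$, where $c^{(n)}$ is the coefficient of $\bar\mu\cup[1^{n-|\bar\mu|}]$ in $x_n$, hence a polynomial in $N$ of degree exactly $m-|\bar\mu|$ whose leading coefficient is proportional to $c_\mu\ne0$. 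So $\theta_N(x)\ne0$ for $N$ large, which is joint injectivity. For (II): by linearity of $\theta_N$ and bilinearity of the product it suffices to take two diagrams $\Delta_1,\Delta_2$, and then $\theta_N(\Delta_1\Delta_2)=\sum_{k\le|\Delta_1|+|\Delta_2|}\rho_{(N-k)}(\{\Delta_1\Delta_2\}_k)$ while $\theta_N(\Delta_1)\circ\theta_N(\Delta_2)=\rho_{(N-|\Delta_1|)}(\Delta_1)\circ\rho_{(N-|\Delta_2|)}(\Delta_2)$; these agree provided $\rho_{(N-|\Delta_1|)}(\Delta_1)\circ\rho_{(N-|\Delta_2|)}(\Delta_2)=\sum_{k\le|\Delta_1|+|\Delta_2|}\rho_{(N-k)}(\{\Delta_1\Delta_2\}_k)$ for every $N\ge|\Delta_1|+|\Delta_2|$ --- for $N=|\Delta_1|+|\Delta_2|$ this is the defining recursion, and the same identity for all larger $N$ is the missing input (it also forces $\{\Delta_1\Delta_2\}_n=0$ for $n>|\Delta_1|+|\Delta_2|$).

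\emph{Main obstacle.} The real content therefore lies entirely in (II): the $\circ$-product in $G(S_N)$ of two class sums, expressed through the diagrams obtained by stabilising with unit rows up to degree $N$, must be \emph{stable in $N$} in the above sense. This is the Farahat--Higman stability of class-algebra structure constants, and is precisely what makes the Ivanov--Kerov algebra $A_\infty$ well defined \cite{IK}, so it may simply be quoted; alternatively one proves it by partial permutations, where everything becomes transparent: identify a diagram $\Delta$ of degree $n$ with the $S_\infty$-invariant sum of the pairs $(d,\sigma)$ with $|d|=n$ and $\sigma$ of cyclic type $\Delta$ on $d$; then $(d_1,\sigma_1)(d_2,\sigma_2)=(d_1\cup d_2,\sigma_1\sigma_2)$ visibly involves only the finitely many levels $n=|d_1\cup d_2|$ lying between $\max\{|\Delta_1|,|\Delta_2|\}$ and $|\Delta_1|+|\Delta_2|$, the factor $\frac{(r+k)!}{r!k!}$ in $\rho_k(\Delta)$ counts the enlargements of an $n$-set to an $(n+k)$-set keeping the moved points fixed, and the subtraction defining $\{\Delta_1\Delta_2\}_n$ is the inclusion--exclusion over support sizes. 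Carrying out this partial-permutation bookkeeping rigorously --- the passage from (I) and (II) to associativity being routine --- is where the work sits.
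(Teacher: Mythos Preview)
Your proposal is correct but proceeds along a different line from the paper's argument. The paper handles commutativity exactly as you do (inheritance from commutativity of each $A_n^\circ$), but for associativity it writes down a closed triple-product formula
\[
\Delta_1\Delta_2\Delta_3=\sum_n\{\Delta_1\Delta_2\Delta_3\}_n,
\]
defined by the obvious three-factor analogue of the recursion, and asserts that both bracketings $(\Delta_1\Delta_2)\Delta_3$ and $\Delta_1(\Delta_2\Delta_3)$ compute this same sum; associativity of each $(A_n^\circ,\circ)$ then gives the result. This is a direct, self-contained verification (the unwinding of the recursion is left to the reader).

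Your route is more structural: you build stabilisation maps $\theta_N$ into the centres $A_N^\circ$, show joint injectivity, and then reduce (II) to Farahat--Higman stability of class-algebra structure constants, which you quote from \cite{IK} or reprove via partial permutations. This makes transparent \emph{why} the recursion terminates at $|\Delta_1|+|\Delta_2|$ and why the product is well defined, and it simultaneously explains the isomorphism with the Ivanov--Kerov algebra that the paper treats separately in its next theorem. The cost is that you import a nontrivial external statement, whereas the paper's triple-product identity is in principle checkable by pure combinatorics from the recursion alone. Both approaches rest on the same phenomenon---stability under adding fixed points---but the paper keeps it implicit in the recursion while you isolate it as the named ingredient.
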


\begin{proof} Commutativity follows from the commutativity of the algebras
$A_n^{\circ}$. Associativity follows from the associativity of the algebras
$A_n^{\circ}$ and the equality

$$\Delta_1\Delta_2\Delta_3=\sum\limits_ {n=\max\{|\Delta_1|,|\Delta_2|,
|\Delta_3|\}} ^{|\Delta_1|+|\Delta_2|}\{\Delta_1\Delta_2\Delta_3\}_{n}$$
where
$$\{\Delta_1\Delta_2\Delta_3\}_n=\left\{\begin{array}{lr}
\rho_{(n-|\Delta_1|)}(\Delta_1)
\circ \rho_{(n-|\Delta_2|)}(\Delta_2)\circ \rho_{(n-|\Delta_3|)}
(\Delta_3)&\hbox{for }n=\max\{|\Delta_1|,|\Delta_2|,|\Delta_3|\}\\
\\
\rho_{(n-|\Delta_1|)}(\Delta_1)
\circ \rho_{(n-|\Delta_2|)}(\Delta_2)\circ \rho_{(n-|\Delta_3|)}(\Delta_3)-\\
-\sum\limits_ {k=\max\{|\Delta_1|,|\Delta_2|,|\Delta_3|\}}^{n-1}
\rho_{(n-k)}(\{\Delta_1\Delta_2\Delta_3\}_k)&\hbox{for }n>\max\{|\Delta_1|,
|\Delta_2|,|\Delta_3|\}\end{array}\right.$$

\end{proof}

\begin{theorem} {\rm \label{t2} The algebras $A_{\infty}$ and $A_{\infty}^{\circ}$
are naturally isomorphic.}
\end{theorem}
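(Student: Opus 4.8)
The plan is to set up $A_\infty$ explicitly in the partial--permutation model of \cite{IK} and produce the isomorphism directly on the natural basis of Young diagrams. Recall that a \emph{partial permutation} of $\mathbb{N}$ is a pair $(S,\sigma)$ with $S\subset\mathbb{N}$ finite and $\sigma\in\Sym(S)$, that such pairs multiply by $(S,\sigma)\cdot(T,\tau)=(S\cup T,\bar\sigma\bar\tau)$, where $\bar\sigma,\bar\tau$ denote the extensions by the identity to $\Sym(S\cup T)$, and that the finitary group $\Sym(\mathbb{N})$ acts by conjugation; $A_\infty$ is the algebra of (suitably completed) $\Sym(\mathbb{N})$-invariant combinations, with linear basis the orbit sums
\[
G_\infty(\Delta)=\sum_{\substack{(S,\sigma):\ |S|=|\Delta|\\ \sigma\ \text{of type}\ \Delta}}(S,\sigma),
\]
one for each Young diagram $\Delta$, and with the product of any two orbit sums being a \emph{finite} combination of orbit sums (\cite{IK}). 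Since $A_\infty^\circ=\bigoplus_n A_n^\circ$ has this same set of Young diagrams as a basis ($\Delta$ lying in degree $|\Delta|$), I would take $\phi\colon A_\infty^\circ\to A_\infty$ to be the linear extension of $\Delta\mapsto G_\infty(\Delta)$: this is automatically a linear isomorphism, and it is the ``natural'' one in that it does not move the indexing diagrams. Everything then reduces to checking that $\phi$ is multiplicative.

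Next I would evaluate $G_\infty(\Delta_1)\cdot G_\infty(\Delta_2)$ by expanding it as the sum of $(S_1\cup S_2,\bar\sigma_1\bar\sigma_2)$ over pairs of partial permutations of types $\Delta_1,\Delta_2$, and grouping the terms by $n:=|S_1\cup S_2|$, which runs over $\max\{|\Delta_1|,|\Delta_2|\}\le n\le|\Delta_1|+|\Delta_2|$. Conjugation--invariance forces the coefficient of a given $(S,g)$ with $|S|=n$ to depend only on the cycle type $\Delta$ of $g$; denote it $c^{\Delta}_{\Delta_1\Delta_2}$, the number of factorizations $g=\bar\sigma_1\bar\sigma_2$ with $(S_i,\sigma_i)$ of type $\Delta_i$ and $S_1\cup S_2=S$ \emph{exactly}. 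So $G_\infty(\Delta_1)\cdot G_\infty(\Delta_2)=\sum_n\sum_{|\Delta|=n}c^{\Delta}_{\Delta_1\Delta_2}\,G_\infty(\Delta)$, and it remains to prove $\{\Delta_1\Delta_2\}_n=\sum_{|\Delta|=n}c^{\Delta}_{\Delta_1\Delta_2}\Delta$ inside $A_n^\circ$ for every $n$.

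The connecting device is an embedding-type identity for $\rho_k$. For a $k$-subset $S'\subseteq\{1,\dots,n\}$ let $\iota_{S'}\colon A_k^\circ\to A_n^\circ$ send a class sum to the same class sum computed inside $\Sym(S')$ with every permutation extended by the identity. I claim $\rho_{n-k}(\xi)=\sum_{S'\subseteq\{1,\dots,n\},\,|S'|=k}\iota_{S'}(\xi)$ for all $\xi\in A_k^\circ$; it is enough to see this for a basis diagram $\Delta$ with $r$ unit rows, where a fixed permutation $g$ of $\{1,\dots,n\}$ of type $\Delta^{\,n-k}$ is obtained as $\bar\sigma$ from a type-$\Delta$ partial permutation $(S',\sigma)$ precisely when $S'$ consists of the $k-r$ moved points of $g$ together with $r$ of its $n-k+r$ fixed points, i.e.\ in $\binom{n-k+r}{r}=\binom{r+(n-k)}{n-k}$ ways, matching the multiplicity in $\rho_{n-k}(\Delta)=\binom{r+(n-k)}{n-k}\,\Delta^{\,n-k}$. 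Substituting $\rho_{n-|\Delta_i|}(\Delta_i)=\sum_{S_i}\iota_{S_i}(\Delta_i)$ into $\rho_{n-|\Delta_1|}(\Delta_1)\circ\rho_{n-|\Delta_2|}(\Delta_2)$ in $G(\Sym\{1,\dots,n\})$ and re-sorting the products $\bar\sigma_1\bar\sigma_2$ according to $U:=S_1\cup S_2$, one obtains $\rho\circ\rho=\sum_{m}\rho_{n-m}(Q_m)$, where $Q_m:=\sum_{|\Delta|=m}c^{\Delta}_{\Delta_1\Delta_2}\Delta\in A_m^\circ$ is exactly the ``union has size $m$, exactly'' part. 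Comparing with the recursion $\{\Delta_1\Delta_2\}_n=\rho\circ\rho-\sum_{k<n}\rho_{n-k}(\{\Delta_1\Delta_2\}_k)$ and inducting on $n$, with base case $n=\max\{|\Delta_1|,|\Delta_2|\}$ (where one of the two supports is already all of $\{1,\dots,n\}$, so $\rho\circ\rho=Q_n$ outright), gives $\{\Delta_1\Delta_2\}_n=Q_n$ for all $n$. Summing over $n$ and applying $\phi$ then yields $\phi(\Delta_1\Delta_2)=G_\infty(\Delta_1)\cdot G_\infty(\Delta_2)$, so $\phi$ is an algebra isomorphism and $A_\infty^\circ\cong A_\infty$.

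The step I expect to be the real obstacle is the last one --- making the inclusion--exclusion bookkeeping airtight: one must verify, with fully consistent conventions for how unit rows count toward the cycle type and for the normalization $\binom{r+k}{k}$ built into $\rho_k$, that subtracting $\sum_{k<n}\rho_{n-k}(\{\Delta_1\Delta_2\}_k)$ removes precisely the contributions of pairs whose supports overlap in more than $|\Delta_1|+|\Delta_2|-n$ points, leaving the honest ``exact union'' count $Q_n$ in degree $n$. By contrast, constructing $\phi$, quoting the closure of $A_\infty$ from \cite{IK}, and the binomial-coefficient identity for $\rho_k$ are all routine.
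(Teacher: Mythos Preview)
Your proposal is correct and follows essentially the same strategy as the paper: identify the natural bijection $\Delta\leftrightarrow G_\infty(\Delta)$, expand $G_\infty(\Delta_1)G_\infty(\Delta_2)$ degree by degree, and match each graded piece with the recursive definition of $\{\Delta_1\Delta_2\}_n$ by subtracting the $\rho$-images of the lower pieces. The paper's proof is a terse sketch of exactly this inclusion--exclusion (done for the first two degrees and then declared ``similar''), whereas you make it rigorous by working in the Ivanov--Kerov partial-permutation model, proving the spreading identity $\rho_{n-k}(\xi)=\sum_{|S'|=k}\iota_{S'}(\xi)$ explicitly, and carrying the induction through; this extra care is useful precisely because it pins down how unit rows in $\Delta$ interact with the support bookkeeping, which the paper leaves implicit.
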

\begin{proof} Product of the formal sums $G_{\infty}(\Delta_1)$ and
$G_{\infty}(\Delta_2)$ is a finite sum of the formal sums of the type
$G_{\infty}(\Delta)$, where $\max\{|\Delta_1|,|\Delta_2|\}\leq |\Delta| \leq
|\Delta_1|+|\Delta_2|$. If
$|\Delta_1|=|\Delta_2|=|\Delta|=n$,
then $G_{\infty}(\Delta)=\sum_g C^{\Delta}_{\Delta_1,\Delta_2}gG_n(\Delta)g^{-1}$,
where the sum goes over all finite permutations $g$ of the set $\mathbb{N}$,
which do not preserve $\{1,2,\dots,n\}$. By the same reason, the formal sum
$G_{\infty}(\Delta)$ for
$|\Delta_1|+1=|\Delta_2|+1=|\Delta|\in\mathcal{A}_n$ is equal to
$\sum_g C^{\Delta}_{\rho_1({\Delta}_1),\rho_1({\Delta}_2)}gG_n(\Delta)g^{-1}$
minus $\rho_1(G_{\infty}(\hat{\Delta}))$, where $\Delta=\rho_1(\hat{\Delta})$.
Similar arguments prove the statement of the theorem for all
$\Delta_1$, $\Delta_2$ of coinciding degrees. If, however,
$|\Delta_1|<|\Delta_2|=|\Delta|$, then the term $G_{\infty}(\Delta)$,
for the product of $\Delta_1$ and $\Delta_2$, coincides with the term
$G_{\infty}(\Delta)$, for the product of $\tilde{\Delta}_1$ and $\Delta_2$,
where $\tilde{\Delta}_1=\rho_{|\Delta_2|-|\Delta_1|}(\Delta_1)$.
\end{proof}

\section{{\bf Differential representation of the algebra $U(gl(\infty))$}\label{s2}}

Consider the set of formal differential operators $$D_{ab}=
\sum\limits_{e\in\{1,\dots,N\}}X_{ae}\frac{\partial}{\partial X_{be}}$$ of the Miwa variables
$\{X_{ij}|i,j\leq N\}$ Multiplication of operators is given by the rule
$$D_{ab} D_{cd}=\sum\limits_{e_1,e_2\in\mathbb{N}}
X_{ae_1}X_{ce_2}\frac{\partial}{\partial X_{be_1}}\frac{\partial}{\partial
X_{de_2}}+\delta_{bc}\sum\limits_{e\in\mathbb{N}}
X_{ae}\frac{\partial}{\partial X_{de}}$$ Commutation relations for the operators
$D_{ab}$ coincide with the commutation relations for the generators of the matrix algebra.
Hence, the operators $D_{ab}$ give rise to the algebra $U(N)$ naturally isomorphic to
the universal enveloping algebra $U(gl(N))$.

In the limit $N\rightarrow\infty$ there emerges the algebra $U_{\infty}$ of the formal
differential operators which are finite or countable sums of operators of the form
$$:D_{a_1b_1}\cdots D_{a_nb_n}:=\sum\limits_{e_1,...,e_n\in\mathbb{N}}
X_{a_1e_1}\cdots X_{a_ne_n}\frac{\partial}{\partial X_{b_1e_1}}\cdots
\frac{\partial}{\partial X_{b_ne_n}}$$
We call the number $|\mathcal{U}|=n$ \textit{degree of the operator} $\mathcal{U}$.
Linear combinations of the operators of the same degree are called
\textit{homogeneous operators}.

Thus, the vector space $U_{\infty}$ is decomposed into the direct sum
$U_{\infty}=\sum\limits_ {n\in\mathbb{N}}U_n$ of the subspaces of homogeneous
operators of degree $n$. Consider the projection
$pr_n: U_{\infty}\rightarrow U_n$, preserving the operators of degree $n$ and
mapping to zero all other homogeneous operators.

Introduce on $U_n$ a multiplication $"\circ"$ by the formula
$\mathcal{U}_1\circ \mathcal{U}_2=pr_n(\mathcal{U}_1\ \mathcal{U}_2)$.
This multiplication turns $U_n$ into an associative algebra of the differential
operators $U_n^{\circ}$.

Consider an embedding of the vector spaces
$$\varrho_k:U_n\rightarrow U_{n+k}$$
where $$\varrho_k(:D_{a_1b_1}\cdots D_{a_nb_n}:)=
\frac{1}{k!}\sum\limits_ {c_1,...,c_k\in\mathbb{N}}:D_{c_1c_1}
\cdots D_{c_kc_k}D_{a_1b_1}\cdots D_{a_nb_n}:$$
The operators $\mathcal{U}$ and $\varrho_k(\mathcal{U})$ acts similarly
on the monomials $X$ of degree $n+k$ of the Miwa variables $\{X_{i,j}\}$.

One immediately checks the following claim:

\begin{theorem}\label{t3}  {\rm There is an equality

$$\mathcal{U}_1\mathcal{U}_2=\sum\limits_ {n=\max\{|\mathcal{U}_1|,
|\mathcal{U}_2|\}} ^{|\mathcal{U}_1|+|\mathcal{U}_2| }
\{\mathcal{U}_1\mathcal{U}_2\}_{n}$$ where $$\{\mathcal{U}_1
\mathcal{U}_2\}_n = \left\{\begin{array}{lr}
\varrho_{(n-|\mathcal{U}_1|)}(\mathcal{U}_1)\circ
\varrho_{(n-|\mathcal{U}_2|)}(\mathcal{U}_2)&\hbox{for }
n=\max\{|\mathcal{U}_1|,|\mathcal{U}_2|\}\\
\\
\varrho_{(n-|\mathcal{U}_1|)}(\mathcal{U}_1)\circ
\varrho_{(n-|\mathcal{U}_2|)}(\mathcal{U}_2)-\sum\limits_
{k=\max\{|\mathcal{U}_1|,|\mathcal{U}_2|\}}^{n-1}
\varrho_{(n-k)}(\{\mathcal{U}_1 \mathcal{U}_2\}_k)\\&\hbox{for }
n>\max\{|\mathcal{U}_1|,|\mathcal{U}_2|\}\end{array}\right.$$}
\end{theorem}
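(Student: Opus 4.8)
The statement (Theorem \ref{t3}) claims that the product of two homogeneous differential operators decomposes into homogeneous pieces via exactly the same inductive formula that defined multiplication of Young diagrams in Section \ref{s1}. My plan is to exploit the close structural parallel: the maps $\varrho_k$ on $U_n$ are the operator-theoretic shadow of the maps $\rho_k$ on $A_n^\circ$, and the decomposition $U_\infty = \bigoplus_n U_n$ by degree is the analogue of the grading on $A_\infty^\circ$. So the proof reduces to two largely formal observations, plus one genuine computation about how normal-ordered products behave.

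First I would record the only substantive analytic input: the normal-ordered product $:D_{a_1b_1}\cdots D_{a_mb_m}: \ \cdot\ :D_{c_1d_1}\cdots D_{c_nd_n}:$, expanded by moving all derivatives to the right and applying the commutation relations, is a sum of normal-ordered operators of degrees ranging from $\max\{m,n\}$ up to $m+n$: each contraction (pairing a $\partial/\partial X_{b_ie}$ with an $X_{c_je}$, producing a Kronecker $\delta_{b_ic_j}$ and contracting the summed index) lowers the degree by one, and since $:D\cdots D:$ of degree $m$ annihilates monomials of degree $<m$, the lowest surviving degree cannot drop below $\max\{m,n\}$. This is exactly the bookkeeping that appears in the displayed two-operator product formula in Section \ref{s2}, and it is the ``one immediately checks'' content the authors allude to; I would spell out the contraction combinatorics just far enough to see the degree bounds and that $pr_n(\mathcal U_1\mathcal U_2)$ is well defined.

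Next I would verify the compatibility of $\varrho_k$ with the graded product, namely that $\varrho_{n-|\mathcal U|}(\mathcal U)$ acts on degree-$n$ monomials of the Miwa variables exactly as $\mathcal U$ does (this is literally asserted in the text after the definition of $\varrho_k$: the factor $\frac1{k!}\sum_{c_1,\dots,c_k}:D_{c_1c_1}\cdots D_{c_kc_k}:\,$ acts on a degree-$k$ monomial as multiplication by $k!/k!=1$ after the combinatorial count of ways to hit the $k$ factors, so $\varrho_k(\mathcal U)$ restricted to degree $n+k$ agrees with $\mathcal U$ on degree $n$). Granting that, the product $\mathcal U_1\mathcal U_2$ viewed as an operator on degree-$n$ monomials coincides with $\varrho_{n-|\mathcal U_1|}(\mathcal U_1)\circ\varrho_{n-|\mathcal U_2|}(\mathcal U_2)$ only after one subtracts the contributions that $\varrho$ has already injected from the lower-degree homogeneous components of the product — and those contributions are precisely $\sum_{k<n}\varrho_{n-k}(\{\mathcal U_1\mathcal U_2\}_k)$ by induction on $n$. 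This is the same telescoping argument as in Theorem \ref{t1}, now for operators rather than diagrams.

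The main obstacle, and the step I would treat most carefully, is the base-and-induction argument that pins down $\{\mathcal U_1\mathcal U_2\}_n$: one must check that the homogeneous degree-$n$ part of $\mathcal U_1\mathcal U_2$, which a priori is just $pr_n(\mathcal U_1\mathcal U_2)$, equals the expression defined recursively in the statement. The induction is on $n$ starting from $n=\max\{|\mathcal U_1|,|\mathcal U_2|\}$; the inductive step requires knowing that $\varrho_{n-k}$ is injective on $U_k$ and that $pr_n\circ\varrho_{n-k} = \mathrm{id}$ on the image in the appropriate sense, so that the already-counted lower pieces can be subtracted cleanly. The potential subtlety is that $\varrho_{n-k}(\{\mathcal U_1\mathcal U_2\}_k)$ and the ``fresh'' degree-$n$ contractions might overlap as operators on degree-$n$ monomials; I would resolve this by the same acts-the-same-way principle — two operators in $U_\infty$ that agree on all monomials of \emph{every} degree are equal, and the recursive subtraction is designed exactly so that the difference acts as zero on degree $<n$ and as $pr_n(\mathcal U_1\mathcal U_2)$ on degree $n$. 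Once that identification is in place, the displayed formula follows, and together with Theorem \ref{t1}'s formalism it confirms that $U_\infty$ carries the analogous graded commutative (here only associative) algebra structure.
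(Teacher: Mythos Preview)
The paper offers no proof of this theorem: it is introduced with ``One immediately checks the following claim'' and followed only by two worked examples. Your proposal correctly supplies the argument the authors left implicit, and it is the natural one --- expand the product of normal-ordered operators via contractions to see the degree range $\max\{|\mathcal U_1|,|\mathcal U_2|\}\le n\le |\mathcal U_1|+|\mathcal U_2|$, then use the principle that $\varrho_k(\mathcal U)$ and $\mathcal U$ act identically on monomials of the appropriate degree, together with the fact that a degree-$n$ operator is determined by its action on degree-$n$ monomials, to identify $\{\mathcal U_1\mathcal U_2\}_n$ with $pr_n(\mathcal U_1\mathcal U_2)$ by induction.

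One small correction: your justification for the lower degree bound (``$:D\cdots D:$ of degree $m$ annihilates monomials of degree $<m$'') is about action on polynomials rather than about the operator product itself. The direct reason the degree cannot drop below $\max\{m,n\}$ is combinatorial: in the product of a degree-$m$ and a degree-$n$ normal-ordered operator, at most $\min\{m,n\}$ contractions can occur (each pairs one $\partial/\partial X$ from the left factor with one $X$ from the right), so the resulting normal-ordered terms have degree $m+n-j$ with $0\le j\le\min\{m,n\}$. This gives the range directly and makes the rest of your induction go through cleanly.
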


\begin{example}\label{ex2.1} {\rm Put $\mathcal{U}_1=\sum\limits_{e\in\mathbb{N}}
X_{a_1e}\frac{\partial}{\partial X_{b_1e}}$,
$\mathcal{U}_2=\sum\limits_{e\in\mathbb{N}}
X_{a_2e}\frac{\partial}{\partial X_{b_2e}}$.\\
Then $\mathcal{U}_1 \mathcal{U}_2= \delta_{b_1,a_2}
\sum\limits_{e\in\mathbb{N}}X_{a_1e}\frac{\partial}
{\partial X_{b_2e}}+ \sum\limits_{e_1,e_2\in\mathbb{N}}
X_{a_1e_1}X_{a_2e_2}\frac{\partial}{\partial X_{b_1e_1}}
\frac{\partial}{\partial X_{b_2e_2}}$.

\noindent
On the other hand, $\{\mathcal{U}_1 \mathcal{U}_2\}_1=\mathcal{U}_1\circ\mathcal{U}_2=
\delta_{b_1,a_2}\sum\limits_{e\in\mathbb{N}}X_{a_1e}\frac{\partial}{\partial X_{b_2e}}$
and\\ $\varrho_1(\{\mathcal{U}_1 \mathcal{U}_2\}_1)= \delta_{b_1,a_2}
 \sum\limits_{c\in\mathbb{N}} \sum\limits_{e,f\in\mathbb{N}}X_{cf}X_{a_1e}
\frac{\partial}{\partial X_{cf}}\frac{\partial}{\partial X_{b_2e}}$.

\noindent
Besides, $\varrho_1(\mathcal{U}_1)=\sum\limits_{c_1\in\mathbb{N}}
\sum\limits_{e_1,e\in\mathbb{N}}
X_{c_1f_1}X_{a_1e_1}\frac{\partial}{\partial X_{c_1f_1}}
\frac{\partial}{\partial X_{b_1e_1}}\ $ and\\ $\varrho_1(\mathcal{U}_2)=
\sum\limits_{c_2\in\mathbb{N}} \sum\limits_{e_2,e\in\mathbb{N}}
X_{c_2f_2}X_{a_2e_2}\frac{\partial}{\partial X_{c_2f_2}}
\frac{\partial}{\partial X_{b_2e_1}}$. Hence, \\$\varrho_1(\mathcal{U}_1)
\circ\varrho_1(\mathcal{U}_2)= \sum\limits_{e_1,e_2\in\mathbb{N}}
X_{a_1e_1}X_{a_2e_2}\frac{\partial}{\partial X_{b_1e_1}}
\frac{\partial}{\partial X_{b_2e_2}}+\delta_{b_1,a_2}\sum\limits_{c\in\mathbb{N}}
 \sum\limits_{e,f\in\mathbb{N}}X_{cf}X_{a_1e}
\frac{\partial}{\partial X_{cf}}\frac{\partial}{\partial X_{b_2e}}$.

\noindent
Thus, $\{\mathcal{U}_1 \mathcal{U}_2\}_2=\sum\limits_{e_1,e_2\in\mathbb{N}}
X_{a_1e_1}X_{a_2e_2}\frac{\partial}{\partial X_{b_1e_1}}
\frac{\partial}{\partial X_{b_2e_2}}$.}
\end{example}

\begin{example}\label{ex2.2} {\rm
Put $\mathcal{U}_1=\sum\limits_{e_1^1\in\mathbb{N}}
X_{a_1^1e_1^1}\frac{\partial}{\partial X_{b^1_1e_1^1}}$, $\mathcal{U}_2=
\sum\limits_{e_2^1,e_2^2\in\mathbb{N}}
X_{a_2^1e_2^1}X_{a_2^2e_2^2}\frac{\partial}{\partial X_{b_2^1e_2^1}}
\frac{\partial}{\partial X_{b_2^2e_2^2}}$.

\noindent
Then $\mathcal{U}_1 \mathcal{U}_2= \delta_{b_1^1,a_2^1}\sum\limits_{e_1^1,e_2^2
\in\mathbb{N}}
X_{a_1^1e_1^1}X_{a_2^2e_2^2}
\frac{\partial}{\partial X_{b_2^1e_1^1}}\frac{\partial}{\partial X_{b_2^2e_2^2}}+
\delta_{b_1^1,a_2^2}\sum\limits_{e_1^1,e_2^1\in\mathbb{N}}
X_{a_1^1e_1^1}X_{a_2^1e_2^1}
\frac{\partial}{\partial X_{b_2^2e_1^1}}\frac{\partial}{\partial X_{b_2^1e_2^1}}+$
$\sum\limits_{e_1^1,e_2^1,e_2^2\in\mathbb{N}}
X_{a_1^1e_1^1}X_{a_2^1e_2^1}X_{a_2^2e_2^2}
\frac{\partial}{\partial X_{b_1^1e_1^1}}
\frac{\partial}{\partial X_{b_2^1e_2^1}}
\frac{\partial}{\partial X_{b_2^2e_2^2}}$.
On the other hand, $\{\mathcal{U}_1 \mathcal{U}_2\}_2=\varrho_1(\mathcal{U}_1)
\circ\mathcal{U}_2=$\\

\noindent
$(\sum\limits_{c_1^1\in\mathbb{N}} \sum\limits_{f_1^1,e_1^1\in\mathbb{N}}
X_{c_1^1f_1^1}X_{a_1^1e_1^1}\frac{\partial}{\partial X_{c_1^1f_1^1}}
\frac{\partial}{\partial X_{b_1^1e_1^1}})\circ
(\sum\limits_{e_2^1,e_2^2\in\mathbb{N}}
X_{a_2^1e_2^1}X_{a_2^2e_2^2}\frac{\partial}{\partial X_{b_2^1e_2^1}}
\frac{\partial}{\partial X_{b_2^2e_2^2}})=$\\
$\delta_{b_1^1,a_2^1}\sum\limits_{e_2^1,e_2^2\in\mathbb{N}}
X_{a_1^1e_2^1}X_{a_2^2e_2^2}\frac{\partial}{\partial X_{b_2^1e_2^1}}
\frac{\partial}{\partial X_{b_2^2e_2^2}}+ \delta_{b_1^1,a_2^2}\sum\limits_{e_2^1,e_2^2\in\mathbb{N}}
X_{a_1^1e_2^2}X_{a_2^1e_2^1}\frac{\partial}{\partial X_{b_2^1e_2^1}}
\frac{\partial}{\partial X_{b_2^2e_2^2}}$
and\\
$\varrho_1(\{\mathcal{U}_1 \mathcal{U}_2\}_2)=
\sum\limits_{c_1^1\in\mathbb{N}}\sum\limits_{f_1^1,e_1^1\in\mathbb{N}}$
$\delta_{b_1^1,a_2^1}\sum\limits_{e_2^1,e_2^2\in\mathbb{N}}
X_{c_1^1f_1^1}X_{a_1^1e_2^1}X_{a_2^2e_2^2}
\frac{\partial}{\partial X_{c_1^1f_1^1}}
\frac{\partial}{\partial X_{b_2^1e_2^1}}
\frac{\partial}{\partial X_{b_2^2e_2^2}}+$
$\delta_{b_1^1,a_2^2}\sum\limits_{c_1^1\in\mathbb{N}}
\sum\limits_{e_2^1,e_2^2\in\mathbb{N}}
X_{c_1^1f_1^1}X_{a_1^1e_2^2}X_{a_2^1e_2^1}
\frac{\partial}{\partial X_{c_1^1f_1^1}}
\frac{\partial}{\partial X_{b_2^1e_2^1}}
\frac{\partial}{\partial X_{b_2^2e_2^2}}$.
Besides,\\
$\varrho_2(\mathcal{U}_1)\circ\varrho_1(\mathcal{U}_2)=$
$\sum\limits_{e_1^1,e_2^1,e_2^2\in\mathbb{N}}
X_{a_1^1e_1^1}X_{a_2^1e_2^1}X_{a_2^2e_2^2}
\frac{\partial}{\partial X_{b_1^1e_1^1}}
\frac{\partial}{\partial X_{b_2^1e_2^1}}
\frac{\partial}{\partial X_{b_2^2e_2^2}}+\\$
$\sum\limits_{c_1^1\in\mathbb{N}}\sum\limits_{f_1^1,e_1^1\in\mathbb{N}}
\delta_{b_1^1,a_2^1}\sum\limits_{e_2^1,e_2^2\in\mathbb{N}}
X_{c_1^1f_1^1}X_{a_1^1e_2^1}X_{a_2^2e_2^2}
\frac{\partial}{\partial X_{c_1^1f_1^1}}
\frac{\partial}{\partial X_{b_2^1e_2^1}}
\frac{\partial}{\partial X_{b_2^2e_2^2}}+\\$
$\delta_{b_1^1,a_2^2}\sum\limits_{c_1^1\in\mathbb{N}}
\sum\limits_{e_2^1,e_2^2\in\mathbb{N}}
X_{c_1^1f_1^1}X_{a_1^1e_2^2}X_{a_2^1e_2^1}
\frac{\partial}{\partial X_{c_1^1f_1^1}}
\frac{\partial}{\partial X_{b_2^1e_2^1}}
\frac{\partial}{\partial X_{b_2^2e_2^2}}$.

Thus, $\mathcal{U}_1 \mathcal{U}_2=\{\mathcal{U}_1 \mathcal{U}_2\}_1+
\{\mathcal{U}_1 \mathcal{U}_2\}_2$.}
\end{example}

\section{{\bf Algebra $\mathcal{W}_{\infty}$ of the differential operators}\label{s3}}

Associate with the Young diagram $\Delta=[\mu_1,\mu_2,\dots,\mu_l]$
with the ordered row lengths $\mu_1\geq\mu_2\geq\dots\geq\mu_l$ the numbers $m_k=m_k(\Delta)=
|\{i|\mu_i=k\}|$
and $\kappa(\Delta)=(\prod\limits_{k}m_k!k^{m_k})^{-1}$.

Associate with the Young diagram $\Delta$ the operator $\mathcal{W}(\Delta)=
\kappa(\Delta)\prod\limits_{k}
:D_k^{m_k}:\ \in U_{\infty}$.

\begin{example} \label{ex3.1} {\rm $\mathcal{W}([1])=
\sum\limits_{a\in\mathbb{N}} :D_{aa}:$ \quad
$\mathcal{W}([2])=\frac{1}{2}\sum\limits_{a,b\in\mathbb{N}} :D_{ab}D_{ba}:$}
\end{example}

Denote through $\mathcal{W}^{\circ}_n$ the vector space generated by the operators of the form
$\mathcal{W}(\Delta)$, where $|\Delta|=n$.

\begin{lemma}\label{l3.1} {\rm The operation $"\circ"$ provides
the structure of algebra on $\mathcal{W}^{\circ}_n$.
The correspondence
$\Delta\mapsto\mathcal{W}(\Delta)$ gives rise to the isomorphism of algebras
$\psi_n: A_n^{\circ}\rightarrow \mathcal{W}_n^{\circ}$.}
\end{lemma}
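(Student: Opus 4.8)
The plan is to exhibit $\psi_n\colon A_n^\circ\to\mathcal W_n^\circ$ as the restriction of a well-understood algebra isomorphism, rather than verify the structure constants by hand. Recall that the group algebra $G_n=G(S_n)$ acts on the degree-$n$ homogeneous component of the polynomial ring in the Miwa variables $\{X_{ij}\}$ by permuting the first (``flavour'') index, and that the operators $:D_{a_1b_1}\cdots D_{a_nb_n}:$ of degree $n$ span exactly the image of this action inside $U_n$ — an operator of degree $n$ is determined by how it acts on monomials of degree $n$, and on that space $U_n^\circ$ is the image of $G_n$ under $g\mapsto \hat g$, where $\hat g\,(X_{c_1 e_1}\cdots X_{c_n e_n})=X_{c_{g(1)}e_1}\cdots X_{c_{g(n)}e_n}$ after the appropriate symmetrisation. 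First I would make this precise: show that the linear map $\phi_n\colon G_n\to U_n^\circ$, $g\mapsto \sum_{c_1,\dots,c_n}:D_{c_{g(1)}c_1}\cdots D_{c_{g(n)}c_n}:$ (up to the normalisation $1/n!$) is a surjective algebra homomorphism for the ``$\circ$'' product, because composing the normal-ordered operators and projecting back to degree $n$ exactly reproduces composition of permutations — this is the content already illustrated in Examples \ref{ex2.1} and \ref{ex2.2}, and it is the step most worth spelling out carefully.

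Next I would identify the image of a conjugacy class. If $g$ has cyclic type $\Delta=[\mu_1,\dots,\mu_l]$, then summing $\phi_n(g)$ over the conjugacy class, or equivalently symmetrising a single $g$ over all relabellings of $\{1,\dots,n\}$, produces precisely $\prod_k :D_k^{m_k}:$ up to a combinatorial factor counting the number of ways to build a permutation of type $\Delta$ with a prescribed cycle structure on labelled points; that factor is $n!\,\kappa(\Delta)$ with $\kappa(\Delta)=(\prod_k m_k!\,k^{m_k})^{-1}$, which is exactly the normalisation in the definition of $\mathcal W(\Delta)$. Hence $\phi_n\bigl(G_n(\Delta)\bigr)=\mathcal W(\Delta)$ after fixing the overall constant, so $\mathcal W_n^\circ$ is the image of the centre $A_n^\circ\subset G_n$ under the algebra homomorphism $\phi_n$. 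It follows immediately that $\circ$ closes on $\mathcal W_n^\circ$ and that $\psi_n=\phi_n|_{A_n^\circ}$ is an algebra epimorphism $A_n^\circ\to\mathcal W_n^\circ$.

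It remains to check injectivity of $\psi_n$. The operators $\prod_k:D_k^{m_k}:$ for distinct $\Delta$ involve disjoint sets of ``index-cycle patterns'': acting on a generic monomial $X_{c_1 e_1}\cdots X_{c_n e_n}$ with algebraically independent entries, $\mathcal W(\Delta)$ produces a sum over ways of closing the $b$-indices into cycles of the shape $\Delta$, and these contributions for different $\Delta$ are linearly independent as polynomials in the $X$'s. So I would test $\psi_n$ on such a monomial and read off that a linear combination $\sum_\Delta a_\Delta \mathcal W(\Delta)$ vanishes only if all $a_\Delta=0$; since $\{\mathcal W(\Delta):|\Delta|=n\}$ is then a basis of $\mathcal W_n^\circ$ of the same cardinality as $\mathcal A_n=\dim A_n^\circ$, the surjection $\psi_n$ is an isomorphism. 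The main obstacle is the bookkeeping in the first step — verifying that $\circ$-composition of normal-ordered operators corresponds to composition of permutations, i.e. that the ``contraction'' terms $\delta_{b_ia_j}$ in $D_{ab}D_{cd}$ assemble exactly into the multiplication of the underlying bijections — but this is a finite, purely combinatorial check of the kind already performed in Examples \ref{ex2.1}–\ref{ex2.2}, now done with indices rather than on specific small cases.
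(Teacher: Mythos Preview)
Your approach is essentially the same as the paper's: both associate to every permutation $g\in S_n$ the operator $\sum_{a_1,\dots,a_n}:D_{a_1a_{g(1)}}\cdots D_{a_na_{g(n)}}:$ (up to normalisation), observe that this depends only on the cycle type of $g$ and equals $\mathcal W(\Delta(g))$, and then reduce the lemma to the multiplicativity statement that the $\circ$-product of these operators reproduces the group-algebra product in $G_n$. Your write-up is considerably more explicit than the paper's three-line proof --- in particular you spell out that the map extends to a homomorphism $\phi_n\colon G_n\to U_n^\circ$ and you address injectivity, both of which the paper leaves implicit --- but the underlying idea is identical.
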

\begin{proof} Associate with each permutation $g\in S_n$ the operator
$\mathcal{W}(g)=\kappa(\Delta(g))\sum\limits_{a_1,...,a_n\in\mathbb{N}}
:D_{a_1a_{g(1)}}\cdots D_{a_na_{g(n)}}:$ Then $\mathcal{W}(\Delta(g))=
\mathcal{W}(g)$. Hence, the claim of the lemma follows from the equality
$\mathcal{W}(\Delta(g_1)\circ\Delta(g_2))=
\mathcal{W}(g_1)\circ \mathcal{W}(g_2)$ для $g_1,g_2\in S_n$.
\end{proof}

\begin{lemma}\label{l3.2} {\rm The embedding $\varrho_k:U_n\rightarrow U_{n+k}$ gives rise to
the embedding $\varrho_k:\mathcal{W}^{\circ}_n\rightarrow
\mathcal{W}^{\circ}_{n+k}$, where $\varrho_k\psi_n(\Delta)=\psi_{n+k}\rho_k(\Delta)$
at $\Delta\in A_n$.}
\end{lemma}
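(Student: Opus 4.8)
The plan is to reduce everything to the level of permutations, using the explicit formula $\mathcal{W}(g)=\kappa(\Delta(g))\sum_{a_1,\dots,a_n}:D_{a_1a_{g(1)}}\cdots D_{a_na_{g(n)}}:$ introduced in the proof of Lemma \ref{l3.1}, together with the definition $\varrho_k(:D_{a_1b_1}\cdots D_{a_nb_n}:)=\frac1{k!}\sum_{c_1,\dots,c_k}:D_{c_1c_1}\cdots D_{c_kc_k}D_{a_1b_1}\cdots D_{a_nb_n}:$. First I would observe that adding the "diagonal" factors $D_{c_1c_1}\cdots D_{c_kc_k}$ to a normal-ordered monomial $\mathcal{W}(g)$ for $g\in S_n$, and summing over $c_1,\dots,c_k\in\mathbb N$, produces exactly the normal-ordered monomial associated with the permutation $g'\in S_{n+k}$ that acts as $g$ on $\{1,\dots,n\}$ and fixes $n+1,\dots,n+k$ pointwise; the cyclic type of such $g'$ is precisely $\Delta(g)$ with $k$ extra unit rows appended, i.e.\ $\Delta(g')=\Delta(g)^k$ in the notation of Section \ref{s1}. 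So at the level of individual permutations, $\varrho_k$ sends $\mathcal{W}(g)$ to a sum of $\mathcal{W}(g')$-type terms.

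The core of the argument is then bookkeeping of multiplicities. Fix a diagram $\Delta$ with $|\Delta|=n$ and let $r=m_1(\Delta)$ be its number of unit rows. On one side, $\psi_n(\Delta)=\mathcal{W}(\Delta)=\kappa(\Delta)\prod_k:D_k^{m_k}:$, and applying $\varrho_k$ spreads the $\frac1{k!}\sum_{c_1,\dots,c_k}$ over this operator. On the other side, $\rho_k(\Delta)=\frac{(r+k)!}{r!\,k!}\Delta^k$, and $\psi_{n+k}(\rho_k(\Delta))=\frac{(r+k)!}{r!\,k!}\kappa(\Delta^k)\prod_j:D_j^{m_j(\Delta^k)}:$. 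Since $m_1(\Delta^k)=r+k$ and $m_j(\Delta^k)=m_j(\Delta)$ for $j\ge 2$, one has $\kappa(\Delta^k)=\kappa(\Delta)\cdot\frac{r!\,1^r}{(r+k)!\,1^{r+k}}=\kappa(\Delta)\cdot\frac{r!}{(r+k)!}$, so the prefactor on the right collapses to $\kappa(\Delta)\cdot\frac1{k!}$ times $:D_1^{r+k}:\prod_{j\ge2}:D_j^{m_j}:$. I would then check that $\varrho_k$ applied to $\kappa(\Delta)\,:D_1^r:\prod_{j\ge2}:D_j^{m_j}:$ yields exactly this: the operator $\sum_c D_{cc}$ is nothing but $\mathcal W([1])$ (Example \ref{ex3.1}), and inserting $k$ such factors into the normal-ordered product and dividing by $k!$ reproduces $\frac1{k!}\kappa(\Delta):D_1^{r+k}:\prod_{j\ge2}:D_j^{m_j}:$, because within a normal-ordered monomial the inserted $D_{cc}$'s commute freely with everything and simply augment the $D_1$-block. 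That the composite symbol on both sides is genuinely the same element of $U_{n+k}$ (not merely "acts the same on degree-$n+k$ monomials") follows from the remark in Section \ref{s2} that $\mathcal U$ and $\varrho_k(\mathcal U)$ act identically on degree-$(n+k)$ monomials in the $X$'s, combined with the fact that a homogeneous operator in $U_m$ is determined by its action on degree-$m$ monomials.

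The one genuine subtlety — and the step I expect to be the main obstacle — is matching the combinatorial factor $\frac{(r+k)!}{r!\,k!}$ correctly: it must be verified that the naive insertion of diagonal operators does not overcount or undercount, i.e.\ that $\frac1{k!}\sum_{c_1,\dots,c_k}:D_{c_1c_1}\cdots D_{c_kc_k}D_1^r\cdots: \;=\;\frac1{k!}:D_1^{r+k}\cdots:$ as symbols, with no extra binomial correction, so that the surviving factor $\frac{(r+k)!}{r!}$ on the right is precisely absorbed by the ratio $\kappa(\Delta)/\kappa(\Delta^k)$. This is a direct consequence of how $\kappa$ was defined (the $m_k!$ in the denominator is exactly what turns a sum over ordered tuples of distinct "cycle slots" into a clean power of $D_k$), so I would present it as a short explicit computation with the $\kappa$-factors written out, as above, rather than invoking anything deeper. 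Once that identity of prefactors is in hand, the equality $\varrho_k\psi_n(\Delta)=\psi_{n+k}\rho_k(\Delta)$ is immediate, and injectivity of $\varrho_k$ on $\mathcal W_n^\circ$ follows from injectivity of $\varrho_k$ on $U_n$ (already noted) together with the isomorphism $\psi_n$ of Lemma \ref{l3.1}.
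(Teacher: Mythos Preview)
Your proposal is correct and follows essentially the same route as the paper: write $\psi_n(\Delta)=\mathcal W(\Delta)$ explicitly, apply $\varrho_k$ to insert the $k$ diagonal factors $D_{c_ic_i}$ with the $\tfrac1{k!}$ prefactor, and then compare with $\psi_{n+k}\rho_k(\Delta)=\tfrac{(m_1+k)!}{m_1!\,k!}\,\kappa(\Delta^k)\prod_j:D_j^{m_j(\Delta^k)}:$ using the identity $\kappa(\Delta^k)=\kappa(\Delta)\cdot\tfrac{m_1!}{(m_1+k)!}$, so that both sides reduce to $\tfrac{1}{k!}\kappa(\Delta)\sum :D_{c_1c_1}\cdots D_{c_kc_k}\,D_{a_1a_{g(1)}}\cdots D_{a_na_{g(n)}}:$. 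The ``main obstacle'' you flag --- the cancellation of $\tfrac{(r+k)!}{r!}$ against $\kappa(\Delta)/\kappa(\Delta^k)$ --- is exactly the computation the paper carries out, and your added remark on injectivity is a harmless elaboration the paper leaves implicit.
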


\begin{proof} The map $\varrho_k:U_n\rightarrow U_{n+k}$
gives rise to the correspondence
$\varrho_k(\mathcal{W}(\Delta))=\frac{1}{k!}\mathcal{W}
({\Delta}^k)$.

In accordance with our definitions, $\psi_n(\Delta)=
\kappa(\Delta)
\sum\limits_{a_1,...,a_n\in\mathbb{N}} :D_{a_1a_{g(1)}}\cdots D_{a_na_{g(n)}}:$ and
$\varrho_k\psi_n(\Delta)=
\frac{1}{k!}\kappa(\Delta)
\sum\limits_{c_1,...,c_k,a_1,...,a_n\in\mathbb{N}} :D_{c_1c_1}\cdots D_{c_kc_k}
D_{a_1a_{g(1)}}\cdots D_{a_na_{g(n)}}:$

On the other hand, $\rho_k(\Delta)=\frac{(m_1+k)!}{m_1!k!}{\Delta}^k$ and \\
$\psi_{n+k}\rho_k(\Delta)= \frac{(m_1+k)!}{m_1!k!}
\kappa(\Delta)
(\frac{(m_1+k)!}{m_1!})^{-1}\sum\limits_{c_1,...,c_k,a_1,...,a_n\in\mathbb{N}}
:D_{c_1c_1}\cdots D_{c_kc_k} D_{a_1a_{g(1)}}\cdots D_{a_na_{g(n)}}:=
\\ \frac{1}{k!}(\prod\limits_{k}m_k!k^{m_k})^{-1}
\sum\limits_{c_1,...,c_k,a_1,...,a_n\in\mathbb{N}} :D_{c_1c_1}\cdots D_{c_kc_k}
D_{a_1a_{g(1)}}\cdots D_{a_na_{g(n)}}:$
\end{proof}

\begin{example}\label{ex3.2} {\rm Put $\Delta=[1]$.
Then $\rho_k(\Delta)=(k+1)[1,1,\dots,1]$ and \\
$\psi_{k+1}(\rho_k(\Delta))=
\psi_{k+1}((k+1)[1,1,\dots,1])=\frac{(k+1)}{(k+1)!} \sum\limits_{c_1,...,c_{k+1}
\in\mathbb{N}}
:D_{c_1c_1}\cdots D_{c_nc_n}:$

On the other hand, $\psi_{1}([1])=\sum\limits_{e\in\mathbb{N}}
:D_{cc}:$ and  \\
$\varrho_{k}(\sum\limits_{e\in\mathbb{N}}
:D_{cc}:)=\frac{1}{k!} \sum\limits_{e,e_1,...,e_k\in\mathbb{N}}
\sum\limits_{e\in\mathbb{N}}
:D_{cc}D_{c_1c_1}\cdots D_{c_nc_n}:$}
\end{example}

\begin{example}\label{ex3.3} {\rm Put $\Delta=[2]$.
$\psi_{2}([2])=
\frac{1}{2}\sum\limits_{a,b\in\mathbb{N}} :D_{ab}D_{ba}:$
and \\
$\varrho_{k}(\frac{1}{2}\sum\limits_{a,b\in\mathbb{N}} :D_{ab}D_{ba}:)=$
$\frac{1}{2k!}\sum\limits_{a,b,e_1,...,e_k\in\mathbb{N}} :D_{ab}D_{ba}D_{c_1c_1}
\cdots D_{c_nc_n}:$

On the other hand, $\rho_k(\Delta)= [2,1,\dots,1]$ and \\
$\psi_{k+2}(\rho_k(\Delta))=\psi_{k+2}([2,1,\dots,1])=
\frac{1}{2k!}\sum\limits_{a,b,e_1,...,e_k\in\mathbb{N}} :D_{ab}D_{ba}D_{c_1c_1}
\cdots D_{c_nc_n}:$}
\end{example}

Denote through $\mathcal{W}_{\infty}\subset U_{\infty}$ the subalgebra generated
by the differential operators $\mathcal{W}(\Delta)$. Confronting Theorems
\ref{t1}, \ref{t2}, \ref{t3} with Lemmas \ref{l3.1},
\ref{l3.2}, one obtains

\begin{theorem}\label{t4} {\rm The isomorphisms $\psi_n$ gives rise to the isomorphism
of the algebras
$\psi: A_{\infty}\rightarrow W_{\infty}$.}
\end{theorem}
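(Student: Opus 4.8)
The plan is to assemble Theorem~\ref{t4} purely from the pieces already proved, so that essentially no new computation is required. Recall that Theorem~\ref{t2} gives a natural isomorphism $A_\infty\cong A_\infty^\circ=\bigoplus_n A_n^\circ$, where the multiplication on the right-hand side is the one built from the maps $\rho_k$ and the recursive truncation $\{\Delta_1\Delta_2\}_n$; and Theorem~\ref{t1} guarantees that this really is a commutative associative algebra. On the operator side, $\mathcal{W}_\infty\subset U_\infty$ is by definition the subalgebra generated by the $\mathcal{W}(\Delta)$, and Theorem~\ref{t3} provides exactly the same recursive truncation formula $\{\mathcal{U}_1\mathcal{U}_2\}_n$ for the product in $U_\infty$, built from the embeddings $\varrho_k$. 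So the whole content of Theorem~\ref{t4} is that the graded linear bijection $\psi=\bigoplus_n\psi_n$ intertwines these two multiplications.

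First I would record that $\psi=\bigoplus_n\psi_n$ is a linear isomorphism of vector spaces $A_\infty^\circ\to\mathcal{W}_\infty^{\mathrm{gr}}:=\bigoplus_n\mathcal{W}_n^\circ$, since each $\psi_n:A_n^\circ\to\mathcal{W}_n^\circ$ is a linear isomorphism by Lemma~\ref{l3.1}. The key compatibility is then the following two facts, already in hand: (i) $\psi_n$ is an algebra homomorphism for the truncated product $"\circ"$ on each graded piece (Lemma~\ref{l3.1}); and (ii) $\psi$ intertwines the stabilization maps, $\varrho_k\circ\psi_n=\psi_{n+k}\circ\rho_k$ (Lemma~\ref{l3.2}). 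Given (i) and (ii), I would argue by induction on $n$ that for all Young diagrams $\Delta_1,\Delta_2$ one has $\psi\bigl(\{\Delta_1\Delta_2\}_n\bigr)=\{\psi(\Delta_1)\,\psi(\Delta_2)\}_n$. The base case $n=\max\{|\Delta_1|,|\Delta_2|\}$ is immediate: apply $\psi_n$ to $\rho_{(n-|\Delta_1|)}(\Delta_1)\circ\rho_{(n-|\Delta_2|)}(\Delta_2)$, use (i) to pull $\psi_n$ inside the $"\circ"$, and then use (ii) twice to convert $\psi_n\rho_{(n-|\Delta_i|)}$ into $\varrho_{(n-|\Delta_i|)}\psi_{|\Delta_i|}$, which is exactly the definition of $\{\psi(\Delta_1)\,\psi(\Delta_2)\}_n$ on the operator side from Theorem~\ref{t3}. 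For the inductive step, the recursive formula expresses $\{\Delta_1\Delta_2\}_n$ as $\rho_{(n-|\Delta_1|)}(\Delta_1)\circ\rho_{(n-|\Delta_2|)}(\Delta_2)-\sum_{k<n}\rho_{(n-k)}(\{\Delta_1\Delta_2\}_k)$; applying $\psi_n$, the first term is handled as in the base case, while for each subtracted term $\psi_n\rho_{(n-k)}=\varrho_{(n-k)}\psi_k$ by (ii), and $\psi_k(\{\Delta_1\Delta_2\}_k)=\{\psi(\Delta_1)\,\psi(\Delta_2)\}_k$ by the induction hypothesis, reproducing precisely the operator-side recursion. Summing over $n$ from $\max\{|\Delta_1|,|\Delta_2|\}$ to $|\Delta_1|+|\Delta_2|$ then gives $\psi(\Delta_1\Delta_2)=\psi(\Delta_1)\psi(\Delta_2)$, and extending bilinearly shows $\psi$ is an algebra homomorphism; being a linear bijection, it is an isomorphism. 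Composing with the isomorphism of Theorem~\ref{t2} yields the stated isomorphism $A_\infty\to\mathcal{W}_\infty$.

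Two small points deserve attention. One must check that the subalgebra $\mathcal{W}_\infty$ generated by the $\mathcal{W}(\Delta)$ inside $U_\infty$ is in fact the graded sum $\bigoplus_n\mathcal{W}_n^\circ$ with the product described by Theorem~\ref{t3} — i.e. that no operators outside this span arise; but this is exactly the assertion that $\mathcal{U}_1\mathcal{U}_2=\sum_n\{\mathcal{U}_1\mathcal{U}_2\}_n$ with each $\{\mathcal{U}_1\mathcal{U}_2\}_n$ lying in $\mathcal{W}_n^\circ$, which follows inductively from Lemma~\ref{l3.2} (each $\varrho_{(n-k)}$ maps $\mathcal{W}_k^\circ$ into $\mathcal{W}_n^\circ$) together with Lemma~\ref{l3.1}. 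The other is purely bookkeeping: the stabilization identity (ii) must be invoked in the form $\psi_{n}\circ\rho_{(n-m)}=\varrho_{(n-m)}\circ\psi_{m}$ for $m\le n$, which is Lemma~\ref{l3.2} with the roles of the subscripts renamed, so one should keep the index shifts straight.

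I do not expect a genuine obstacle here — the theorem is a formal consequence of Theorems~\ref{t1}--\ref{t3} and Lemmas~\ref{l3.1}--\ref{l3.2}, and the proof in the paper is likely to be the one-line remark ``Confronting Theorems \ref{t1}, \ref{t2}, \ref{t3} with Lemmas \ref{l3.1}, \ref{l3.2}, one obtains the claim.'' The only place where care is needed, and thus the ``hardest'' part, is verifying that the two recursive definitions of the truncated product (one on diagrams, one on operators) are matched term-by-term under $\psi$; this is the induction sketched above, and it hinges entirely on the commuting square $\varrho_k\circ\psi_n=\psi_{n+k}\circ\rho_k$ from Lemma~\ref{l3.2} being compatible with the $"\circ"$-homomorphism property from Lemma~\ref{l3.1}.
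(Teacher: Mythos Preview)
Your proposal is correct and follows exactly the paper's approach: the paper states Theorem~\ref{t4} as an immediate consequence of Theorems~\ref{t1}, \ref{t2}, \ref{t3} and Lemmas~\ref{l3.1}, \ref{l3.2}, with no further argument. Your inductive matching of the two recursive truncation formulas via the commuting square $\varrho_k\circ\psi_n=\psi_{n+k}\circ\rho_k$ is precisely the verification the paper leaves implicit, and you even correctly predicted the one-line form of the paper's proof.
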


\section{{\bf Schur functions\label{s4}}}

The Schur function of $n$ variables corresponds to the Young diagram
$R=\{R_1\geq R_2\geq
\dots\geq R_m>0\}$, where $n\geq m$. It is defined by the formula
$$s_{R}(x_1,\dots,x_n)=\frac{\det[x_i^{R_j+n-j}]_{1\leq i, j
\leq n}}{\det[x_i^{n-j}]_{1\leq i, j\leq n}},$$
where $R_i=0$ at $m<i\leq n$. The property of stability
$s_{R}(x_1,\dots,x_n,0)=s_{R}(x_1,\dots,x_n)$ allows one to define
$s_{R}$ on an arbitrary finite set of variables.

Define the Schur functions on finite matrices $X\in gl(n)\subset
gl(\infty)$ by the formula $s_{R}(X)= s_{R}(x_1,\dots,x_n)$, where
$x_1,\dots,x_n$ are the eigenvalues of the matrix $X$. The functions
$s_{R}(X)$ form a basis in the vector space $\textsf{P}$ of all symmetric
polynomials of the Miwa variables.

Polynomials of the variables $p_i=\tr X^i$ form another natural basis
of the space $\textsf{P}$,
$$
\tilde s_R(p)=\det [P_{R_i+j-i}(p)]_{1\leq i, j
\leq n}\,,\ \ \ \ \ \ \ \exp\big(\sum_k p_kx^k\big)\equiv\sum_iP_i(p)x^i
$$
where $p=(p_1,p_2\dots)$. Then $s_{R}(X)=\tilde s_R(p)$  \cite{M}. Associate with the Young
diagram $\Delta$ the monomial $p(\Delta)=
\kappa(\Delta)p_1^{m_1(\Delta)} p_2^{m_2(\Delta)}\dots
p_n^{m_n(\Delta)}$.

Put $\Delta^k=[\Delta,\underbrace{1,...,1}_k]$. Let $\dim R$ be the dimension
of representation of the symmetric group $S_{|R|}$ corresponding to the diagram
$R$, and $\chi_R(X_{\Delta})$ be the value of character of this representation on the
element of the cyclic type $\Delta^{|R|-|\Delta|}$. Put
$d_R=\frac{\dim R}{|R|!}=\frac{ \prod\limits_{i < j = 1}^{|R|}
\left( \mu_i -\mu_j - i + j \right) } {\prod\limits_{i = 1}^{|R|}
\left(\mu_i +|R| - i\right)!}$ and $m_1=m_1(\Delta)$.

Denote through $\varphi_R(\Delta)$ the function which is equal to $\frac{\kappa(\Delta)}
{d_R m_1!(|R|-|\Delta|-m_1)!}\chi_R(X_{\Delta})$
at $|R|-|\Delta|\geq m_1$ and 0 otherwise. Then $\varphi_R(\Delta^k)=
\frac{m_1!k!}{(m_1+k)!}\varphi_R(\Delta)$ at $k=|R|-|\Delta|$.

\begin{theorem}\label{t5} {\rm The functions $s_{R}(X)$ are the eigenfunctions of the operators
$\mathcal{W}(\Delta)$. They form a complete system of eigenfunctions
for the restrictions
$\textsf{W}(\Delta)=\mathcal{W}(\Delta)|_{\textsf{P}}$ and
$\textsf{W}(\Delta)(s_R)=\varphi_R(\Delta)s_R$.}
\end{theorem}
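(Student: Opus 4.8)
The plan is to establish the eigenvalue equation by reducing it to a statement about a single monomial and then using Miwa variables together with the known expansion of Schur functions.

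\textbf{Step 1: Reduce to a single operator } $:D_k^{m}:$\textbf{ acting in Miwa variables.} Since $\mathcal{W}(\Delta)=\kappa(\Delta)\prod_k :D_k^{m_k}:$, and the operators $:D_{ab}:$ are built so that $\sum_a :D_{aa}:$ counts degree, I would first observe that each factor $:D_k^{m_k}:$ acts on the space of homogeneous polynomials of fixed degree $N$ in the Miwa variables $X_{ij}$, so it suffices to understand $\mathcal{W}(\Delta)$ on $s_R(X)$ with $|R|=N$ large. The key computational input is that in the Miwa parametrization $p_i=\tr X^i$, the operator $\sum_{a_1,\dots,a_n}:D_{a_1a_{g(1)}}\cdots D_{a_na_{g(n)}}:$ associated (as in Lemma~\ref{l3.1}) to a permutation $g$ of cyclic type $\Delta$ acts on $s_R(X)$ by a combinatorial rule governed by the symmetric group. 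Concretely, $s_R(X)=\sum_{\lambda}\frac{\chi_R(\lambda)}{z_\lambda}p_\lambda$, and $\mathcal{W}(\Delta)$ is (up to the normalization $\kappa(\Delta)$) the image under the standard $gl(\infty)$-action on $\mathrm{GL}$-characters of the central element $G_n(\Delta)$ of the group algebra; hence it acts on the irreducible piece labelled by $R$ as the scalar by which $G_{|R|}(\Delta^{|R|-|\Delta|})$ acts in the representation $R$.

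\textbf{Step 2: Identify the scalar.} By Schur's lemma that scalar is $\frac{|C_{\Delta^{k}}|\,\chi_R(X_{\Delta})}{\dim R}$ where $k=|R|-|\Delta|$ and $|C_{\Delta^k}|$ is the size of the conjugacy class of cyclic type $\Delta^k$ in $S_{|R|}$. I would compute $|C_{\Delta^k}| = \frac{|R|!}{\prod_j m_j(\Delta^k)!\, j^{m_j(\Delta^k)}}$; since passing from $\Delta$ to $\Delta^k$ only changes $m_1\mapsto m_1+k$, this class size equals $\frac{|R|!\,\kappa(\Delta)}{(m_1+k)!}$ with $\kappa(\Delta)=(\prod_j m_j(\Delta)!\,j^{m_j(\Delta)})^{-1}$. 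Combining with $d_R=\dim R/|R|!$ gives the scalar $\frac{\kappa(\Delta)}{d_R\,(m_1+k)!}\chi_R(X_{\Delta})$. Matching this against the definition of $\varphi_R(\Delta)$ — which carries the extra factor $m_1!(|R|-|\Delta|-m_1)!$ in the denominator rather than $(m_1+k)!$ — requires checking the stated relation $\varphi_R(\Delta^k)=\frac{m_1!k!}{(m_1+k)!}\varphi_R(\Delta)$, so that the value computed at degree $|R|$ is consistent with the value assigned at degree $|\Delta|$; this compatibility, together with Lemma~\ref{l3.2} (which says $\varrho_k$ intertwines $\rho_k$ and multiplication by $1/k!$), is exactly what makes the eigenvalue independent of the auxiliary $N$ and equal to $\varphi_R(\Delta)$.

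\textbf{Step 3: Completeness.} Since $s_R(X)=\tilde s_R(p)$ and the $\tilde s_R(p)$ with $|R|\le N$ span the degree-$\le N$ symmetric polynomials in $N$ Miwa variables, the restrictions $\textsf{W}(\Delta)=\mathcal{W}(\Delta)|_{\textsf P}$ are diagonalized in the Schur basis, which is a basis of $\textsf P$; hence it is a complete system of common eigenfunctions. I would spell out that $\mathcal{W}(\Delta)$ preserves $\textsf P$ because $:D_{ab}:$ preserves $\mathrm{GL}(N)$-invariant (i.e. symmetric) polynomials — it is a conjugation-equivariant differential operator — so the restriction is well defined.

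\textbf{Main obstacle.} The crux is Step~1–2: making precise and rigorous the identification of $\mathcal{W}(\Delta)$, on degree-$N$ polynomials, with the action of the central element $G_N(\Delta^{N-|\Delta|})$ on the Schur–Weyl component $R$. This is where the normalization constants $\kappa(\Delta)$, the $1/k!$ in $\varrho_k$, and the stabilization relation for $\varphi_R$ must all be tracked exactly; a sign or factorial slip here is the easy way to get a wrong eigenvalue. Everything else — symmetry/invariance, completeness — is routine once this dictionary is in place.
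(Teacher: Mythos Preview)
Your argument that the Schur functions are eigenfunctions is essentially the paper's: both reduce to the fact that the $\mathcal{W}(\Delta)$ are (limits of) elements of the center of $U(gl(N))$, and then invoke the Weyl theorem to conclude that the $s_R$ form a complete eigenbasis on $\textsf{P}$.

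For the eigenvalue, however, the paper takes a genuinely different route. Instead of identifying the scalar via Schur's lemma and the character formula, the paper exploits the elementary identity
\[
\mathcal{W}(\Delta)\bigl(e^{p_1}\bigr)=p(\Delta)\,e^{p_1},
\]
which is immediate from the normal-ordered form in Miwa variables (each $\partial/\partial X_{be}$ hits $e^{\tr X}$ to produce $\delta_{be}$). One then expands both sides in Schur functions using the identities $e^{p_1}=\sum_R d_R\tilde s_R$ and $p(\Delta)=\sum_{|R|=|\Delta|}d_R\varphi_R(\Delta)\tilde s_R$ from Macdonald, together with the stabilization $\varphi_R(\Delta^k)=\tfrac{m_1!k!}{(m_1+k)!}\varphi_R(\Delta)$; comparing coefficients of $\tilde s_R$ gives $\textsf{W}(\Delta)s_R=\varphi_R(\Delta)s_R$ directly. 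The virtue of this trick is that all of the factorial bookkeeping you correctly flag as the main obstacle is absorbed into known symmetric-function identities and never has to be done by hand. Your direct Schur--Weyl computation is valid and conceptually cleaner, but it does require that bookkeeping --- and in fact in your Step~2 the class size should read $|C_{\Delta^k}|=|R|!\,\kappa(\Delta)\,m_1!/(m_1+k)!$ rather than $|R|!\,\kappa(\Delta)/(m_1+k)!$; once this is fixed and combined with the factor $\binom{m_1+k}{k}$ coming from $\rho_k$ via Lemma~\ref{l3.2}, one recovers $\varphi_R(\Delta)=\kappa(\Delta)\chi_R(X_\Delta)/\bigl(d_R\,(|R|-|\Delta|)!\bigr)$ as desired.
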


\begin{proof} Consider the regular representation of the algebra
$U(gl(N))$ in the algebra of polynomial functions of matrix elements of
$gl(N)$. The center $Z(U(gl(N)))$ preserves the vector subspace $\textsf{P}$. The algebra
$Z(U(gl(N)))$ is additively generated by the operators $T(\Delta)$ associated with the
Young diagrams $\Delta$, and $T(\Delta)(f)=
\mathcal{W}(\Delta)(f)$ for $f\in \textsf{P}$. Besides, in accordance with the Weyl theorem
\cite{Zh}, the Schur functions $s_R(X)$ form a complete system of the eigenfunctions of the
operators $T(\Delta)$. Taking the limit $N\rightarrow\infty$, one finds that the Schur functions
form a complete system of the eigenfunctions of the operators
$\textsf{W}(\Delta)$.

Now we find the eigenvalues of the operators. In accordance with
\cite[s.1.7]{M},
$p(\Delta)=\sum\limits_{R:|R|= |\Delta|}d_R\varphi_R(\Delta)\tilde s_R$.
Hence,
$$p(\Delta)e^{p_1}=
\sum\limits_{k=0}^{\infty}p(\Delta)\frac{p_1^k}{k!} \sum\limits_{k=0}^{\infty}
\frac{(m_1+k)!}{m_1!k!} p(\Delta^k)=\sum\limits_{k=0}^{\infty}\sum\limits_{|R|=
|\Delta|+k}\frac{(m_1+k)!}{m_1!k!}d_R\varphi(\Delta^k)\tilde s_R=$$
$$=
\sum\limits_{k=0}^{\infty}\sum\limits_{|R|=|\Delta|+k}
d_R\varphi(\Delta)\tilde s_R=\sum\limits_R d_R\varphi(\Delta)\tilde s_R$$

On the other hand, in accordance with \cite[s.1.4, example 3]{M}, $e^{p_1} =
\sum\limits_R  d_R\tilde s_R(p)$, hence, $p(\Delta)e^{p_1}=
\textsf{W}(\Delta)(e^{p_1})=\sum\limits_R d_R\textsf{W}(\tilde s_R)$.
Thus, $\sum\limits_R d_R\textsf{W}(s_R)=\sum\limits_R d_R\varphi(\Delta)s_R$.

We have already proved that $s_R$ form a complete system of the eigenfunctions
of the operator
$\mathcal{W}$. Therefore, the last equality implies $\textsf{W}(\Delta)(s_R)=
\varphi_R(\Delta)s_R$.
\end{proof}

\begin{corollary} {\rm The values of $\varphi_R(\Delta)$ are related by the formula
$$\varphi_R(\Delta_1)\varphi_R(\Delta_2)= \sum\limits_{\Delta}
C^{\Delta}_{\Delta_1\Delta_2}\varphi_R(\Delta)$$
where $C^{\Delta}_{\Delta_1\Delta_2}$ are the structure constants of the algebra $A_{\infty}$,
which are obtained in s.\ref{s1}.}
\end{corollary}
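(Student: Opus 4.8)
The plan is to read the identity off the algebra isomorphism of Theorem~\ref{t4} together with the eigenvalue formula of Theorem~\ref{t5}: for each fixed $R$ the assignment $\Delta\mapsto\varphi_R(\Delta)$ should be exactly the character of the one--dimensional representation of $\mathcal{W}_\infty$ (equivalently of $A_\infty$) on the line spanned by $s_R$, and the displayed formula is just multiplicativity of that character.

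First I would invoke Theorem~\ref{t4}. Since $\psi\colon A_\infty\to\mathcal{W}_\infty$ is an isomorphism of algebras and, by the construction of Section~\ref{s1}, $\Delta_1\Delta_2=\sum_\Delta C^\Delta_{\Delta_1\Delta_2}\Delta$ in $A_\infty^\circ$ with a genuinely \emph{finite} sum over $\max\{|\Delta_1|,|\Delta_2|\}\le|\Delta|\le|\Delta_1|+|\Delta_2|$, applying $\psi$ yields the operator identity $\mathcal{W}(\Delta_1)\,\mathcal{W}(\Delta_2)=\sum_\Delta C^\Delta_{\Delta_1\Delta_2}\,\mathcal{W}(\Delta)$ inside $U_\infty$, where the product on the left is ordinary composition of differential operators (this is precisely what Theorem~\ref{t3} guarantees, namely that composition of operators decomposes through the $\varrho_k$ in the same way as the diagram product). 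Next I would restrict this identity to the subspace $\textsf{P}$ of symmetric polynomials of the Miwa variables. Each $\mathcal{W}(\Delta)$ preserves $\textsf{P}$ (recorded in the introduction, and also visible a posteriori from Theorem~\ref{t5}, which sends every basis element $s_R$ to a scalar multiple of itself), so restriction to $\textsf{P}$ is a ring homomorphism on $\mathcal{W}_\infty$, and we obtain $\textsf{W}(\Delta_1)\,\textsf{W}(\Delta_2)=\sum_\Delta C^\Delta_{\Delta_1\Delta_2}\,\textsf{W}(\Delta)$ as operators on $\textsf{P}$.

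Finally I would evaluate both sides on $s_R\in\textsf{P}$ and use Theorem~\ref{t5}: the left-hand side gives $\textsf{W}(\Delta_1)\big(\varphi_R(\Delta_2)\,s_R\big)=\varphi_R(\Delta_1)\varphi_R(\Delta_2)\,s_R$, while the right-hand side gives $\big(\sum_\Delta C^\Delta_{\Delta_1\Delta_2}\varphi_R(\Delta)\big)s_R$. Since $s_R\neq 0$, comparing coefficients yields the claimed relation.

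There is no essential obstacle: the statement is a formal consequence of Theorems~\ref{t4} and~\ref{t5}. The only points deserving a word of care are that the sum defining $\Delta_1\Delta_2$ — hence the sum on the right-hand side — is finite (guaranteed by the construction in Section~\ref{s1}), and that passing to restrictions is compatible with composition, which is exactly the assertion that $\textsf{P}$ is invariant under all the $\mathcal{W}(\Delta)$. Equivalently, one may phrase the conclusion invariantly: since $\{s_R\}$ is a basis of $\textsf{P}$ on which every $\textsf{W}(\Delta)$ acts diagonally, each $R$ determines an algebra homomorphism $A_\infty\to\mathbb{C}$, $\Delta\mapsto\varphi_R(\Delta)$, and the displayed identity is precisely its multiplicativity.
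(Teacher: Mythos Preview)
Your argument is correct and is exactly the intended one: the corollary is stated immediately after Theorem~\ref{t5} without a separate proof precisely because it follows by applying the operator identity $\mathcal{W}(\Delta_1)\mathcal{W}(\Delta_2)=\sum_\Delta C^{\Delta}_{\Delta_1\Delta_2}\mathcal{W}(\Delta)$ (from Theorem~\ref{t4}) to $s_R$ and reading off the eigenvalues via Theorem~\ref{t5}. Your remarks on finiteness of the sum and invariance of $\textsf{P}$ are the right points of care and nothing further is needed.
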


\section{{\bf First few $\textsf{W}$-operators\label{s5}}}

Represent now the operators $\textsf{W}(\Delta)$ as differential
operators of the variables $\{p_k\}$. Then,
$$ D_{ab} F(p) = X_{ac}\frac{\partial}{\partial X_{bc}} F(p) =
\sum_{k=1}^\infty k(X^k)_{ab} \frac{\partial F(p)}{\partial p_k}$$

Using the relation

$$D_{a'b'}(X^k)_{ab} =
X_{a'c'}\frac{\partial}{\partial X_{b'c'}}(X^k)_{ab}
= \sum_{j=0}^{k-1} X_{a'c'} (X^j)_{ab'}(X^{k-j-1})_{c'b}
= \sum_{j=0}^{k-1} (X^j)_{ab'} (X^{k-j})_{a'b},$$ one obtains

$$D_{a'b'} D_{ab} F(p) = \sum_{k,l=1}^\infty
kl(X^l)_{a'b'}(X^k)_{ab}\frac{\partial^2F(p)}{\partial p_k p_l} +
\sum_{k=1}^\infty\sum_{j=0}^{k-1}k (X^j)_{ab'}(X^{k-j})_{a'b}
\frac{\partial F(p)}{\partial p_k}$$

Thus,

$$: D_{a'b'}D_{ab}:\, F(p) =
\sum_k\left(k\sum_{j=1}^{k-1} (X^j)_{ab'} (X^{k-j})_{a'b}\right)
{\partial F(p)\over\partial p_k}+
\sum_{k,l} kl(X^k)_{ab}(X^l)_{a'b'}{\partial ^2 F(p)\over\partial p_k\partial p_l}.$$

This relation allows one to find all the operators $\textsf{W}$. In particular,
$$
\textsf{W}([1]) = \tr \hat D= \sum_{k=1} kp_k\frac{\partial}{\partial p_k}
$$
$$
\textsf{W}([2]) ={1\over 2} \, :D^2\,:\ =
\frac{1}{2}\sum_{a,b=1}^\infty
\left( (a+b)p_ap_b\frac{\partial}{\partial p_{a+b}} +
abp_{a+b}\frac{\partial^2}{\partial p_a\partial p_b}\right)
$$
$$
\textsf{W}([1,1]) = \frac{1}{2!}\, :(\tr D)^2\,:\ =
\frac{1}{2}\left(\sum_{a=1}^\infty
 a(a-1)p_a\frac{\partial}{\partial p_{a}} +\sum_{a,b=1}^\infty
abp_{a}p_b\frac{\partial^2}{\partial p_a\partial p_b}\right)
$$
$$
\textsf{W}([3]) = \frac{1}{3}\, :\tr D^3\,:\ =
\frac{1}{3}\sum_{a,b,c\geq 1}^\infty
abcp_{a+b+c} \frac{\partial^3}{\partial p_a\partial p_b\partial p_c}
+ \frac{1}{2}\sum_{a+b=c+d} cd\left(1-\delta_{ac}\delta_{bd}\right)
p_ap_b\frac{\partial^2}{\partial p_c\partial p_d} +$$
$$+ \frac{1}{3} \sum_{a,b,c\geq 1}
(a+b+c)\left(p_ap_bp_c + p_{a+b+c}\right)\frac{\partial}{\partial p_{a+b+c}}
$$
$$
\textsf{W}([2,1]) = \frac{1}{2}\, :\tr D^2\,\tr D \,:\ =
{1\over 2}\sum_{a,b\ge 1}(a+b)(a+b-2)p_ap_{b}{\partial\over\partial p_{a+b}}\,+
{1\over 2}\sum_{a,b\ge 1}ab(a+b-2)p_{a+b}{\partial ^2\over\partial p_a\partial p_b}\,
+$$
$$+\frac{1}{2}\sum_{a,b,c\ge 1} (a+b)cp_ap_bp_c\frac{\partial^2}{\partial p_{a+b}
\partial p_c}
+{1\over 2}\sum_{a,b,c\ge 1}abcp_ap_{b+c}{\partial ^3\over\partial p_a\partial p_
b\partial p_c}
$$
$$
\textsf{W}([1,1,1]) = \frac{1}{3!}\, :(\tr D)^3\,:\ =
{1\over 6}\sum_{a\ge 1} a(a-1)(a-2)p_a{\partial\over\partial p_a}\,+$$
$$+ {1\over 4}\sum_{a,b} ab(a+b-2)p_ap_b\frac{\partial^2}{\partial p_a\partial p_b}\,
+{1\over 6}\sum_{a,b,c\ge 1}abcp_ap_bp_c{\partial ^3\over\partial p_a\partial p_b
\partial p_c}
$$

\section{{\bf Hurwitz numbers\label{s7}}}

Each holomorphic morphism of degree $n$ of Riemann surfaces
$f:\widetilde{\Omega}\rightarrow\Omega$ associates with the point
$s\in\Omega$ a local invariant: the Young diagram $\Delta(f,s)$ of degree $n$
with the row lengths being equal to degrees of the map $f$
at the points of complete pre-image
$f^{-1}(s)=
\{s^1,\dots,s^k\}$. More than 100 years ago Hurwitz \cite{H} formulated
a problem of calculating {\it the Hurwitz numbers}
$$H((s_1,\Delta_1),\dots,(s_k,\Delta_k)|\Omega)=\sum_{f\in
Cov_n(\Omega, \{\alpha_1, \dots,\alpha_s\})} \frac{1}{|\Aut(f)|}$$
for an arbitrary set $\{\Delta_1,\dots,\Delta_k\}$
of Young diagrams of degree $n$.
Here $|\Aut(f)|$ is the order of automorphism group of the map $f$, and
$Cov_n(\Omega, \{\alpha_1,\dots,\alpha_s\})$ is a set of classes of the
biholomorphic equivalence of the holomorphic morphisms
$f:\widetilde{\Omega}\rightarrow\Omega$ with the set of critical values
$s_1,\dots,s_k\in\Omega$ and the local invariants
$\alpha(f,s_i)=\alpha_i$.

This number depends only on the genus $g(\Omega)$ of the surface $\Omega$
and the diagrams $\Delta_1,\dots,\Delta_k$. We define
$<\Delta_1,\dots,\Delta_k>_{g(\Omega)}\ =
 H((s_1,\Delta_1),\dots,(s_k,\Delta_k)|\Omega)$.
The Hurwitz numbers of any genus are easily expressed through those
at genus zero,
$<\Delta_1,\dots,\Delta_k>\ =\ <\Delta_1,\dots,\Delta_k>_0$, \cite{AN}.

A defining property of the Hurwitz numbers is \textit{the associativity
relation}
$$<\Delta_1,\dots,\Delta_k>\ =
\sum\limits_{\Upsilon\in\mathcal{A}_n}
<\Delta_1,\dots,\Delta_r,\Upsilon>|\Aut(\Upsilon)|
<\Upsilon,\Delta_{r+1},\dots,\Delta_k>$$
The Hurwitz numbers of coverings with three critical values are related to the
structure constants of the algebra $A_n$ by the formula
$<\Delta_1,\Delta_2,\Delta_3>\ =C_{\Delta_1,\Delta_2}^{\Delta_3} |\Aut(\Delta_3)|^{-1}$.
Arbitrary Hurwitz numbers are expressed through these simplest Hurwitz numbers
by the formula
$$<\Delta_1,\dots,\Delta_k>\ =
\sum\limits_{\Upsilon_1,\dots,\Upsilon_{k-1}\in\mathcal{A}_n}
<\Delta_1,\Delta_2,\Upsilon_1>|\Aut(\Upsilon_1)|
<\Upsilon_1,\Delta_3,\Upsilon_2>\times$$
$$\times|\Aut(\Upsilon_2)|\dots|\Aut(\Upsilon_{k-1})|<\Upsilon_{k-1},
\Delta_{k-1},\Delta_k>,$$ (see, e.g., \cite{AN}).

The Hurwitz numbers appear in different framewroks: strings and QCD \cite{GT},
mirror symmetry
\cite{D}, theory of singularities \cite{A}, matrix models \cite{KSW,MM},
integrable systems \cite{OP,GKM2,O}, Yang-Mills theory \cite{CMR,GKM2} and the theory
of moduli of curves \cite{KL,K,MM} and other branches of string theory.

Associate with Young diagrams $\Delta_1,\dots,\Delta_k$ and $\Delta$, where
$|\Delta_i|\le |\Delta|$ for all $i$, the numbers
$<(\Delta_1,n_1),\dots,(\Delta_k,n_k)|\Delta>$ equal to the Hurwitz numbers
$<\tilde{\Delta}_1,\dots,\tilde{\Delta}_1,\tilde{\Delta}_2,
\dots,\tilde{\Delta}_2,\dots,\tilde{\Delta}_k,\dots,\tilde{\Delta}_k,\Delta>$,
where the Young diagram $\tilde{\Delta}_i=\rho_{|\Delta|-|\Delta_i|}(\Delta_i)$
is met exactly $n_i$ times. We also put
$<(\Delta_1,n_1),\dots,(\Delta_k,n_k)|\Delta>\ =0$, if $|\Delta_i|>|\Delta|$ at least
for one $i$.

Associate a variable $\beta_{\Delta}$ with each Young diagram $\Delta$ and
consider the generating function for the Hurwitz numbers $$\mathcal{Z}=
\sum\limits_{k=1}^{\infty}
\sum\limits_{\Delta,\Delta_1,\dots,\Delta_k\in\mathcal{A}_{\infty}}
\sum\limits_{n_1,\dots,n_k\in\mathbb{N}}
\frac{\beta_{\Delta_1}^{n_1}\dots\beta_{\Delta_n}^{n_k}}
{n_1!\dots n_k!}<\Delta_1^{n_1},\dots,\Delta_k^{n_k}|\Delta>
p(\Delta).$$

\begin{theorem}\label{t7} {\rm For any Young diagram $\Upsilon$ there is an equality
$$\frac{\partial\mathcal{Z}}{\partial\beta_{\Upsilon}}=
\textsf{W}(\Upsilon)\mathcal{Z}$$}
\end{theorem}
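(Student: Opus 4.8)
The plan is to reduce the differential equation $\partial\mathcal{Z}/\partial\beta_\Upsilon = \textsf{W}(\Upsilon)\mathcal{Z}$ to the eigenvalue statement of Theorem \ref{t5} together with the combinatorial definition of the generating function $\mathcal{Z}$. First I would rewrite $\mathcal{Z}$ in the Schur basis. Using the relation $p(\Delta)=\sum_{R:|R|=|\Delta|}d_R\varphi_R(\Delta)\tilde s_R$ from \cite[s.1.7]{M} (already invoked in the proof of Theorem \ref{t5}), and using the formula $<\Delta_1,\Delta_2,\Delta_3>\ =C_{\Delta_1,\Delta_2}^{\Delta_3}|\Aut(\Delta_3)|^{-1}$ expanded recursively through the associativity relation, each coefficient $<\Delta_1^{n_1},\dots,\Delta_k^{n_k}|\Delta>$ becomes a sum over intermediate diagrams of products of structure constants $C$. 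The corollary to Theorem \ref{t5} says exactly that $\varphi_R$ is a character of the algebra $A_\infty$, i.e.\ $\varphi_R(\Delta_1)\varphi_R(\Delta_2)=\sum_\Delta C^{\Delta}_{\Delta_1\Delta_2}\varphi_R(\Delta)$, so all the nested sums of structure constants collapse and one obtains
$$
\mathcal{Z}=\sum_R\left(\sum_{k\ge1}\frac{1}{k!}\Big(\sum_{i}\beta_{\Delta_i}\varphi_R(\Delta_i)\Big)^{\!?}\cdots\right)d_R\,\tilde s_R,
$$
and after resumming the exponential series this should take the clean closed form
$$
\mathcal{Z}=\sum_R d_R\exp\!\Big(\sum_{\Delta\in\mathcal{A}_\infty}\beta_\Delta\,\varphi_R(\Delta)\Big)\tilde s_R .
$$
The main technical point is bookkeeping the normalization factors: the $\frac{1}{n_i!}$ in $\mathcal{Z}$, the $|\Aut(\Upsilon_j)|$ weights in the associativity relation, the passage from $<\tilde\Delta_1,\dots,\Delta>$ to $<(\Delta_i,n_i)|\Delta>$, and the $\frac{(m_1+k)!}{m_1!k!}$ factors relating $\varphi_R(\Delta^k)$ to $\varphi_R(\Delta)$ that already appeared in Theorem \ref{t5}. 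I expect these to match precisely because $\rho_k$ and $\varrho_k$ were set up in sections \ref{s1}--\ref{s3} with exactly those binomial coefficients.

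Once $\mathcal{Z}$ is in the form $\sum_R d_R\,e^{\sum_\Delta\beta_\Delta\varphi_R(\Delta)}\tilde s_R$, the theorem is immediate: differentiating in $\beta_\Upsilon$ brings down a factor $\varphi_R(\Upsilon)$ term by term,
$$
\frac{\partial\mathcal{Z}}{\partial\beta_\Upsilon}=\sum_R d_R\,\varphi_R(\Upsilon)\,e^{\sum_\Delta\beta_\Delta\varphi_R(\Delta)}\tilde s_R,
$$
while by Theorem \ref{t5} the operator $\textsf{W}(\Upsilon)$ acts on the Schur function $\tilde s_R=s_R$ with eigenvalue $\varphi_R(\Upsilon)$, hence
$$
\textsf{W}(\Upsilon)\mathcal{Z}=\sum_R d_R\,e^{\sum_\Delta\beta_\Delta\varphi_R(\Delta)}\,\textsf{W}(\Upsilon)s_R=\sum_R d_R\,\varphi_R(\Upsilon)\,e^{\sum_\Delta\beta_\Delta\varphi_R(\Delta)}s_R,
$$
which is the same expression. (One must note that $\textsf{W}(\Upsilon)$ acts within $\textsf{P}$, so the term-by-term action on the Schur expansion is legitimate.)

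The hard part will be the first step: establishing the closed exponential form of $\mathcal{Z}$, i.e.\ showing that the iterated Hurwitz numbers $<(\Delta_1,n_1),\dots,(\Delta_k,n_k)|\Delta>$, after multiplication by $p(\Delta)=\sum_R d_R\varphi_R(\Delta)\tilde s_R$ and summation, factor through the algebra characters $\varphi_R$. This is where the associativity relation for Hurwitz numbers and the corollary to Theorem \ref{t5} (the multiplicativity of $\varphi_R$) must be combined, with careful attention to the $|\Aut|$ factors and the stabilization maps $\rho_k$; the rest is a direct comparison of two Schur expansions. An alternative, possibly cleaner, route is to avoid the explicit closed form and instead verify the identity coefficient-by-coefficient in the monomials $p(\Delta)$ and the variables $\beta$: the coefficient of $\prod\beta_{\Delta_i}^{n_i}/n_i!$ in $\partial\mathcal{Z}/\partial\beta_\Upsilon$ is, by definition, a Hurwitz number with one extra $\Upsilon$-point, and the action of $\textsf{W}(\Upsilon)$ on $p(\Delta)$ reproduces exactly the effect of inserting that extra critical value via the cut-and-join interpretation; matching the two uses the associativity relation once. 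Either way the crux is the same combinatorial identity, and I would present whichever version keeps the normalization constants most transparent.
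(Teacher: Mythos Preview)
Your proposal is correct, and in fact you have sketched two valid proofs. The paper, however, takes precisely what you call the ``alternative, possibly cleaner, route'': it compares coefficients of $\prod_i\beta_{\Delta_i}^{n_i}/n_i!$ on both sides and observes that the resulting identity
\[
\langle\Delta_1^{n_1},\dots,\Delta_i^{n_i},\dots,\Delta_k^{n_k}\mid\Delta\rangle
=\langle\Delta_1^{n_1},\dots,\Delta_i^{n_i-1},\dots,\Delta_k^{n_k}\mid\Delta\circ\tilde\Delta_i\rangle
\]
is nothing but the associativity relation for Hurwitz numbers, once one uses Theorems~\ref{t2} and~\ref{t4} to identify the action of $\textsf{W}(\Upsilon)$ on the basis $p(\Delta)$ with multiplication by $\tilde\Upsilon$ in $A_\infty$. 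No passage through the Schur basis or Theorem~\ref{t5} is needed.

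Your primary route via the closed form $\mathcal{Z}=\sum_R d_R\exp\big(\sum_\Delta\beta_\Delta\varphi_R(\Delta)\big)\tilde s_R$ is a genuine alternative: it trades the algebra isomorphism (Theorems~\ref{t2},~\ref{t4}) for the spectral statement (Theorem~\ref{t5} and its Corollary). It is longer, since establishing that closed form requires exactly the bookkeeping of $|\Aut|$ factors and $\rho_k$-stabilizations you flag as ``the hard part'', whereas the paper's argument sidesteps all of this by working directly in the $p(\Delta)$ basis. On the other hand, your approach yields the explicit Schur expansion of $\mathcal{Z}$ as a bonus, which is of independent interest and makes the connection to $\tau$-functions transparent; the paper's three-line proof gives only the differential equation.
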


\begin{proof} The claim of the theorem implies a system of relations
between the numbers
$<\Delta_1^{n_1},\dots,\Delta_k^{n_k}|\Delta>$. In accordance with Theorems
\ref{t2} and \ref{t4}, these relations are of the form
$<\Delta_1^{n_1},\dots,\Delta_i^{n_i},\dots,\Delta_k^{n_k}| \Delta>\ =
\ <\Delta_1^{n_1},\dots,\Delta_i^{n_i-1},\dots,\Delta_k^{n_k}|
\Delta\circ\tilde{\Delta}_i>$ and follow from the associativity relation.
\end{proof}

For the Young diagram $\Upsilon=[2]$ and $\beta_{\Upsilon}=0$ at
$\Upsilon\neq [2]$ Theorem \ref{t7} is equivalent to the "cut-and-join"
relation \cite{GJV}. Using the equations with the initial data
${\mathcal{Z}}_0=e^{p_1}$ at all $\beta_\Upsilon=0$ allows one to represent
${\mathcal{Z}}$ as the exponential of the operators
$\textsf{W}(\Upsilon)$ acting on ${\mathcal{Z}}_0$ and calculate this way
any Hurwitz number.

Simplest equations of this kind for the Hurwitz numbers for the
surfaces with boundaries \cite{AN1,AN2} are found in \cite{N}.

\end{document}